\documentclass[reqno, 10pt, a4paper, oneside]{amsart}
\usepackage{amsmath}
\usepackage{amsfonts}
\usepackage{amssymb}
\usepackage[ansinew]{inputenc}
\usepackage{graphicx}
\usepackage{amsbsy}
\usepackage{fancyhdr}
\usepackage{yfonts}
\usepackage{hyperref}
\usepackage{eufrak}
\usepackage{enumerate}

\numberwithin{equation}{section}
\newtheorem{teo}{Theorem}[section]
\newtheorem{prop}[teo]{Proposition}
\newtheorem{lema}[teo]{Lemma}

\newtheorem{coro}[teo]{Corollary}
\theoremstyle{definition}
\newtheorem{defi}[teo]{Definition}

\newtheorem{ej}[teo]{Example}

\title{Norm convergence of nilpotent ergodic averages}
\author{Miguel N. Walsh}
\address{Departamento de Matemática, Facultad de Ciencias Exactas y Naturales, Universidad de Buenos Aires, 1428 Buenos Aires, Argentina}
\email{mwalsh@dm.uba.ar}
\thanks{The author was partially supported by a CONICET doctoral fellowship.}

\begin{document}
\maketitle

\begin{abstract}
We show that multiple polynomial ergodic averages arising from nilpotent groups of measure preserving transformations of a probability space always converge in the $L^2$ norm.
\end{abstract}

\bigskip

\section{Introduction}

The purpose of this paper is to prove the following result.

\begin{teo}
\label{original}
Let $G$ be a nilpotent group of measure preserving transformations of a probability space $(X, \mathcal{X}, \mu)$. Then, for every $T_1, \ldots, T_l \in G$, the averages
\begin{equation}
\label{general ergodic averages}
\frac{1}{N} \sum_{n=1}^{N} \prod_{j=1}^d  \left( T_1^{p_{1,j}(n)} \ldots T_l^{p_{l,j}(n)} \right) f_j
\end{equation}
always converge in $L^2(X, \mathcal{X}, \mu)$, for every $f_1, \ldots, f_d \in L^{\infty}(X, \mathcal{X}, \mu)$ and every set of integer valued polynomials $p_{i,j}$.
\end{teo}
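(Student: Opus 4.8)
\bigskip

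The plan is to recast the theorem as a \emph{uniform} convergence statement for averages of polynomial sequences in a finitely generated nilpotent group, and then to prove that statement by induction on a combinatorial notion of complexity, each inductive step being powered by a van der Corput (Cauchy--Schwarz) maneuver of the kind used in the Bergelson--Leibman PET induction, suitably adapted to the noncommutative setting. Concretely, I would first replace $G$ by the finitely generated nilpotent subgroup $\Gamma=\langle T_1,\dots,T_l\rangle$ and fix a finite filtration $\Gamma=\Gamma_1\supseteq\Gamma_2\supseteq\cdots\supseteq\Gamma_{s+1}=\{1\}$ refining its lower central series. Each map $n\mapsto g_j(n):=T_1^{p_{1,j}(n)}\cdots T_l^{p_{l,j}(n)}$ is then a polynomial sequence valued in $\Gamma$ in the sense of Leibman, whose theorem guarantees that such sequences form a group on which a notion of ``degree'' relative to the filtration is available and behaves well under products and finite differences. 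It therefore suffices to prove that every average
\[
A_N(g_1,\dots,g_d;f_1,\dots,f_d)\;=\;\frac1N\sum_{n=1}^N\bigl(g_1(n)f_1\bigr)\cdots\bigl(g_d(n)f_d\bigr)
\]
converges in $L^2$ as $N\to\infty$, and, crucially, that this holds uniformly over all $d$-tuples of $\Gamma$-valued polynomial sequences of a given bounded complexity and all $f_j$ bounded by $1$ in $L^\infty$; uniformity is indispensable here, since the van der Corput step below introduces a new, unbounded averaging parameter that only a uniform inductive hypothesis can absorb. The cleanest way to arrange this is to prove the finitary ``metastable'' reformulation (following Tao): for every $\varepsilon>0$ and every $F\colon\mathbb N\to\mathbb N$ there is an $M$, depending only on $\varepsilon$, $F$, $d$ and the complexity bound, such that the $A_N$ differ from one another by at most $\varepsilon$ for $N$ ranging over some interval $[\ell,F(\ell)]$ with $\ell\le M$.

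The complexity of a tuple $(g_1,\dots,g_d)$ I would build from the leading terms, relative to the filtration, of the $g_j$ together with all of their pairwise ratios $g_i(n)^{-1}g_j(n)$ — recording roughly how many essentially distinct such leading terms occur at each level — and then order these data by a well-chosen well-ordering. The base cases, at minimal complexity, are either trivial (the integrand $\prod_j g_j(n)f_j$ being, up to a bounded-periodic piece, independent of $n$) or reduce, once one identifies the $g_j$ (their pairwise ratios being then negligible), to von Neumann's mean ergodic theorem.

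For the inductive step, given a tuple of positive complexity I would put $a_n=\prod_j g_j(n)f_j$ and apply van der Corput,
\[
\Bigl\|\tfrac1N\sum_{n\le N}a_n\Bigr\|_{L^2}^2\;\ll\;\frac1H\sum_{h\le H}\Bigl|\frac1N\sum_{n\le N}\langle a_{n+h},a_n\rangle\Bigr|\;+\;o_{N\to\infty}(1),
\]
expand $\langle a_{n+h},a_n\rangle=\int\prod_j\bigl(g_j(n+h)f_j\bigr)\overline{\bigl(g_j(n)f_j\bigr)}\,d\mu$, and apply the measure-preserving transformation $g_1(n)^{-1}$ to the integrand. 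In the resulting $n$-average the $j=1$ slot of the second product collapses to the constant $\overline{f_1}$, while the new polynomial sequences $n\mapsto g_1(n)^{-1}g_j(n+h)$ and $n\mapsto g_1(n)^{-1}g_j(n)$ — once reorganized into standard product form using nilpotency — have, for all but finitely many $h$, strictly smaller complexity than the original tuple: dividing through by $g_1$ trivializes one sequence and converts the relevant ratios into finite differences $g_j(n)^{-1}g_j(n+h)$ of strictly lower degree, while the commutators generated by the reordering lie strictly deeper in the filtration and so do not affect the top-level count. Hence the inner average equals $A_N$ for a new tuple — with $2d$ functions and the extra parameter $h$ — of strictly smaller complexity; invoking the inductive hypothesis uniformly in $h$ and then choosing $H$ large in terms of $\varepsilon$ yields the required metastable estimate for the original tuple.

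The main obstacle is the design of the complexity, which must do two things that pull in opposite directions. It must \emph{strictly decrease} under the van der Corput step despite the noncommutativity — so one must check with care that ``dividing by $g_1$'' genuinely lowers the statistic and that the commutator terms produced by reordering the nilpotent product are harmless — and it must be \emph{well-founded}, so that the induction terminates; establishing the latter is the genuinely new combinatorial ingredient, amounting to showing that the relevant partial order on leading-term data is a well-quasi-order, which I would prove by a Dickson/Higman-type argument on finite sequences of nonnegative integers. Interlaced with this is the bookkeeping needed to keep every estimate uniform over the class of tuples of a given complexity; it is precisely this uniform, finitary formulation that takes over the role played by structure theory (characteristic factors, inverse theorems for nilsequences) in earlier treatments of special cases.
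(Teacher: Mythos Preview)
Your plan has a genuine gap at the inductive step. The van der Corput inequality you invoke is a one-sided bound: it controls $\|A_N\|^2$ in terms of averages of the scalars $c_h(N):=\frac{1}{N}\sum_n\langle a_{n+h},a_n\rangle$. Knowing by induction that each $c_h(N)$ is metastable (indeed convergent) in $N$ gives you, at best, that $\limsup_N\|A_N\|^2$ is bounded by $\frac{1}{H}\sum_{h\le H}|c_h|+O(1/H)$; it does \emph{not} show that $\|A_N-A_{N'}\|$ is small on any interval. A sequence of vectors can have uniformly bounded, even nearly constant, norm without being nearly Cauchy. In Bergelson's weakly mixing PET this is harmless because the right-hand side is shown to vanish, so $A_N\to 0$; in the general case the limit need not be zero, and van der Corput alone offers no grip on what it is. The metastable reformulation does not repair this: metastability of $A_N$ is equivalent to $A_N$ being Cauchy, and your step delivers at most metastability of (an upper bound for) $\|A_N\|$, not of $A_N$ itself. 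The sentence ``it is precisely this uniform, finitary formulation that takes over the role played by structure theory'' is where the plan goes wrong---uniformity is necessary for the induction, but it is not a substitute for a structure/randomness splitting.

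The paper fills exactly this hole with a decomposition. It uses the van der Corput computation not to bound $\|A_N-A_{N'}\|$ directly, but to prove a \emph{weak inverse theorem} (Lemma~\ref{inverso}): if $\|A_N[f_1,\dots,f_{j-1},u]\|$ is large then $u$ correlates with an explicit ``$L$-reducible'' function built from the lower-complexity sequences $\langle g_j|g_i\rangle_m$. A Hahn--Banach argument (Proposition~\ref{decomposition}) then writes $f_j$ as a bounded combination of $L$-reducible functions, plus a function with small correlation against all such, plus a small $L^2$ error. The middle piece contributes small averages by the inverse theorem; the first piece, by the very definition of reducibility, has $g_j(n)\sigma$ approximated in $L^\infty$ by products of lower-complexity sequences applied to fixed functions, so $\mathcal{A}^{\bf g}_{N,N'}$ on it is genuinely controlled by $\mathcal{A}^{{\bf g}_m^\ast}_{N,N'}$, and \emph{that} is where the uniform inductive hypothesis is spent. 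Your complexity combinatorics is in the right spirit and close to the paper's \S\ref{nilpotent}, but without a decomposition separating a structured component (whose oscillation reduces to the lower-complexity problem) from a pseudorandom component (whose averages are small), the induction does not close.
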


This result was conjectured in the present form by Bergelson and Leibman, who also showed that even $\lim_{N \rightarrow \infty} \frac{1}{N} \sum_{n=1}^{N} T^n f S^n g$ need not exist if $T$ and $S$ only generate a solvable group \cite{BL}. 

\subsection{Historical background}

Partial results towards Theorem \ref{original} have a rich history. Notice that when $d=l=1$ and the polynomial is linear it reduces to the classical mean ergodic theorem. The only case of Theorem \ref{original} which was fully settled is that in which $T_1 = \ldots = T_l$, that is, when $G$ is a cyclic group. The study of this case originated in the seminal work of Furstenberg \cite{Fur} on Szemerédi's theorem, while a general solution when the polynomials are linear was later provided by Host and Kra \cite{HK} following the work of several authors (with a different proof subsequently found by Ziegler \cite{Ziegler}). Convergence for general polynomials was established by Bergelson \cite{B} under the assumption of weakly mixing, while the first unconditional non-linear result was obtained by Furstenberg and Weiss \cite{FW}. The general result for cyclic groups and arbitrary polynomials was finally settled by Host and Kra \cite{HK2} and Leibman \cite{L2}.

\medskip
Another case of Theorem \ref{original} which is known is that in which $G$ is abelian and every polynomial is linear. Here, the case $d=2$ was proven by Conze and Lesigne \cite{CL} and assuming extra ergodicity hypothesis on the transformations Zhang \cite{Zhang} gave a proof for $d=3$ and Frantzikinakis and Kra \cite{FK} for general $d$. Without these assumptions, this result was established by Tao \cite{Tao} and by now possesses several different proofs \cite{Austin,Host,Towsner}. However, when $G$ is abelian but the polynomials are arbitrary, very little was known. It was shown by Chu, Frantzikinakis and Host \cite{CFH} that
\begin{equation}
\label{one poly}
 \frac{1}{N} \sum_{n=1}^N T_1^{p_1(n)} f_1 \ldots T_l^{p_l(n)} f_l
 \end{equation}
converges whenever the polynomials $p_i$ have distinct degrees, but the convergence of (\ref{one poly}) has remained open for arbitrary polynomials. Notice that (\ref{one poly}) corresponds to taking $p_{i,j}=0$ whenever $i \neq j$ in Theorem \ref{original}. More generally, very little was known until now for convergence of $\mathbb{Z}^d$ actions along polynomials. A particular result in this direction is the convergence of the averages
$$  \frac{1}{N} \sum_{n=1}^N T^{n^2} f \left( T^{n^2}S^n \right) g,$$
which was established by Austin \cite{A1,A2}.

\medskip
Finally, when $G$ is only assumed to be nilpotent the results are much scarcer. Prior to this paper, it was known by the work of Bergelson and Leibman \cite{BL} that the averages
$$ \frac{1}{N} \sum_{n=1}^N T^n f S^n g,$$
always converge in $L^2$, but even in the linear case no convergence result has been previously established for more than two transformations. 

\subsection{Overview of the proof}

Our proof of Theorem \ref{original} does not make use of the aforementioned results and therefore provides an alternative proof of these statements, which in many cases is substantially simpler than the original ones. In particular, we do not make use of the machinery of characteristic factors which is heavily used in previous literature. The price we pay in doing so is that we do not obtain any explicit description of the limits. In this sense, our approach is similar to that of Tao \cite{Tao}, in that we use a weak inverse theorem (see Lemma \ref{inverso}) to decompose our functions into the sum of a random component, which is easily treatable, and a structured one, which can be handled by an adequate induction. Interestingly, we find that our decomposition is best carried out by adapting ideas of Gowers related to the Hahn-Banach separation theorem \cite{Gowers} and this is done in \S \ref{lemmata}. This is arguably the first time that these ideas are used in a purely ergodic theoretical context.

\medskip
The main new ingredient of the proof is the concept of an $L$-reducible function (Definition \ref{symmetric}), which will play the role of the structured component of our decompositions. We refer to \S \ref{main} for precise definitions, but for now let us discuss what these are in the linear abelian case. Here, an $L$-reducible function $\sigma$ with respect to a set of transformations $T_1, \ldots ,T_j$, is a function for which the behavior of $T_j^n \sigma$ can be somewhat recovered from that of the set $T_1^n b_1, \ldots, T_{j-1}^n b_{j-1}$, for some prescribed set of functions $b_i$. This way, the problem of convergence for the set of transformations $T_1, \ldots, T_j$ is reduced to the analogous question for the smaller set $T_1, \ldots, T_{j-1}$, and one may then proceed inductively. The details of these reductions are carried out in \S \ref{main}.

\medskip
When either $G$ is not abelian or the polynomials are not linear, the system of transformations to which $L$-reducible functions allow us to pass does not admit such a simple expression. In general, it will consist of twice as many transformations as the original one and the degree of the polynomials involved may not necessarily decrease, so that it may seem that we have not gained much with this procedure. As it turns out however, one can define a suitable notion of complexity for every set of transformations and show that the above process does indeed lead us to a set of lower complexity. The proof that every system of transformations of the type studied in Theorem \ref{original} reduces in finitely many steps to one consisting only of the identity transformation $Ix=x$ is performed in \S \ref{nilpotent}, and this completes the proof of Theorem \ref{original}.

\medskip
The methods of this paper immediately shield some further convergence results and these are discussed in Section \S \ref{further}. We also include an appendix with several examples of how the induction process mentioned in the previous paragraph works in some concrete cases.

\medskip
\noindent{\sl Acknowledgments.} I would like to thank my advisor Román Sasyk for a careful reading of the manuscript and several helpful discussions. I would also like to thank Tim Austin, Vitaly Bergelson and Nikos Frantzikinakis for their useful comments.

\section{Decompositions through the Hahn-Banach theorem}
\label{lemmata}

In this section we will review some of the tools developed by Gowers and adapt them to the context of our problem. For a much better discussion of these topics the reader is referred to Gowers's paper \cite{Gowers}. We begin by stating the Hahn-Banach separation theorem in the form it will be needed.

\begin{teo}[Geometric Hahn-Banach]
\label{HB}
Let $A$ be an open convex subset containing $0$ of a real topological vector space $V$ and suppose $v \in V$ does not lie in $A$. Then there exists some continuous linear functional $\phi: V \rightarrow \mathbb{R}$ such that $\phi(v) \ge 1$ and $\phi(w) < 1$ for every $w \in A$.
\end{teo}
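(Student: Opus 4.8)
The plan is to deduce this geometric statement from the classical analytic Hahn--Banach extension theorem by introducing the Minkowski gauge of $A$. First I would define $p: V \to \mathbb{R}$ by
\[
p(x) = \inf\{\, t > 0 : t^{-1}x \in A \,\}.
\]
Since $A$ is open and contains $0$ and scalar multiplication is continuous, for each $x$ the point $t^{-1}x$ tends to $0$ as $t \to \infty$ and hence lies in $A$ for all large $t$; thus $A$ is absorbing and $p(x) \in [0,\infty)$ for every $x$. I would then record the two basic properties: $p$ is positively homogeneous, $p(\lambda x) = \lambda p(x)$ for $\lambda \ge 0$, which is immediate from the definition (with $p(0)=0$ since $0 \in A$), and $p$ is subadditive, $p(x+y) \le p(x) + p(y)$. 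The subadditivity is exactly the point where convexity of $A$ enters: if $s^{-1}x \in A$ and $t^{-1}y \in A$ then $(s+t)^{-1}(x+y) = \tfrac{s}{s+t}(s^{-1}x) + \tfrac{t}{s+t}(t^{-1}y)$ is a convex combination of two points of $A$, hence lies in $A$, giving $p(x+y) \le s+t$, and one takes infima.

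The next step is the identity $A = \{\, x \in V : p(x) < 1 \,\}$. For the inclusion $A \subseteq \{p < 1\}$ one uses openness of $A$: if $x \in A$, continuity of $t \mapsto tx$ at $t=1$ produces $\varepsilon > 0$ with $(1+\varepsilon)x \in A$, so $p(x) \le (1+\varepsilon)^{-1} < 1$. Conversely, if $p(x) < 1$ there is $t \in (0,1)$ with $t^{-1}x \in A$, and then $x = t(t^{-1}x) + (1-t)\cdot 0 \in A$ by convexity. In particular, since $v \notin A$ we obtain $p(v) \ge 1$.

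Now I would apply the analytic Hahn--Banach theorem. On the one-dimensional subspace $\mathbb{R}v$ define the linear functional $\phi_0(\lambda v) = \lambda\, p(v)$. For $\lambda \ge 0$ one has $\phi_0(\lambda v) = \lambda p(v) = p(\lambda v)$, while for $\lambda < 0$ one has $\phi_0(\lambda v) = \lambda p(v) \le 0 \le p(\lambda v)$; hence $\phi_0 \le p$ on $\mathbb{R}v$. Since $p$ is sublinear, Hahn--Banach extends $\phi_0$ to a linear functional $\phi: V \to \mathbb{R}$ with $\phi(x) \le p(x)$ for all $x$. Then $\phi(v) = \phi_0(v) = p(v) \ge 1$, and for $w \in A$ we get $\phi(w) \le p(w) < 1$, which are the required inequalities.

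It remains to check that $\phi$ is continuous, the only step needing slight care. The set $U = A \cap (-A)$ is an open neighbourhood of $0$, and for $w \in U$ both $\phi(w) \le p(w) < 1$ and $-\phi(w) = \phi(-w) \le p(-w) < 1$, so $|\phi| < 1$ on $U$. Hence for each $\varepsilon > 0$ the open neighbourhood $\varepsilon U$ of $0$ satisfies $|\phi| < \varepsilon$ there, so $\phi$ is continuous at $0$, and by linearity continuous on all of $V$. I expect the construction of the gauge and the verification of its subadditivity to be the part most worth writing out in detail, since that is where the convexity hypothesis is genuinely used; the extraction of $\phi$ from the analytic Hahn--Banach theorem and the continuity argument are then short and routine.
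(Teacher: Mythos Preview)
The paper does not actually prove this theorem; it merely states the geometric Hahn--Banach theorem as a standard fact and then uses it. Your argument via the Minkowski gauge is correct and is the standard textbook proof: the construction of $p$, the verification that $A=\{p<1\}$ using openness and convexity, the extension of $\phi_0(\lambda v)=\lambda p(v)$ by analytic Hahn--Banach, and the continuity check via boundedness on $A\cap(-A)$ are all carried out accurately.
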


The idea of Gowers to obtain decompositions can roughly be described as follows. While it may be difficult to check directly whether an arbitrary function can be described by the sum of a structured and a random component, if such a decomposition fails to exist an application of the Hahn-Banach theorem would allow us to find, since these sets tend to be convex, some large functional which does not correlate with random functions (therefore having a kind of structure itself) nor with structured functions (therefore also having some randomness). This way, we are only left with proving that no object can be random and structured at the same time, which generally tends to be an easier task.

\medskip
We will concentrate on the study of a real Hilbert space $\mathcal{H}$ with norm $\left\| \cdot \right\|$. In order to apply the above scheme, we will need the following corollary.

\begin{coro}[cf. {\cite[Corollary 3.2]{Gowers}}]
\label{HB2}
Let $A_1, \ldots, A_n$ be open convex subsets containing $0$ of some real Hilbert space $\mathcal{H}$. Let $c_1,\ldots,c_n>0$ be positive real numbers and suppose $f \in \mathcal{H}$ cannot be written as $\sum_{j=1}^n c_j f_j$ with $f_j \in A_j$. Then there exists some $\phi \in \mathcal{H}$ such that $\langle \phi, f \rangle \ge 1$ and $\langle \phi, g_i \rangle < c_i^{-1}$ for every $g_i \in A_i$.
\end{coro}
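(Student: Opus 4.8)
The plan is to separate $f$ from the Minkowski sum $A := \sum_{j=1}^n c_j A_j$ by means of Theorem \ref{HB}, and then to extract the two desired inequalities from the resulting functional after identifying it with an element of $\mathcal{H}$ via the Riesz representation theorem.

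First I would introduce the set
\[
A := \Bigl\{ \, \textstyle\sum_{j=1}^n c_j f_j \; : \; f_j \in A_j \text{ for every } j \, \Bigr\}.
\]
This set is convex, being a Minkowski combination of convex sets; it contains $0$, since $0 \in A_j$ for each $j$ and hence $0 = \sum_j c_j\cdot 0 \in A$; and it is open, because writing $A = (c_1 A_1) + S$ with $S := \sum_{j \ge 2} c_j A_j$ exhibits $A = \bigcup_{s \in S}\bigl( c_1 A_1 + s \bigr)$ as a union of translates of the open set $c_1 A_1$ (a dilation by $c_1 > 0$ of an open set). Moreover, the hypothesis that $f$ cannot be written as $\sum_{j=1}^n c_j f_j$ with $f_j \in A_j$ says precisely that $f \notin A$.

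Next I would apply Theorem \ref{HB} with $V = \mathcal{H}$ carrying its norm topology, with the open convex set $A$ just constructed, and with $v = f$. This produces a continuous linear functional $\psi : \mathcal{H} \to \mathbb{R}$ with $\psi(f) \ge 1$ and $\psi(w) < 1$ for every $w \in A$. Since $\psi$ is a continuous linear functional on a Hilbert space, the Riesz representation theorem yields $\phi \in \mathcal{H}$ with $\psi(w) = \langle \phi, w\rangle$ for all $w \in \mathcal{H}$; in particular $\langle \phi, f\rangle = \psi(f) \ge 1$, which is the first required inequality.

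Finally I would deduce the bound on each $A_i$. Fixing an index $i$ and an arbitrary $g_i \in A_i$, and using that $0 \in A_j$ for every $j \neq i$, the vector $c_i g_i = c_i g_i + \sum_{j \neq i} c_j\cdot 0$ belongs to $A$, so $c_i \langle \phi, g_i\rangle = \langle \phi, c_i g_i\rangle < 1$; dividing by $c_i > 0$ gives $\langle \phi, g_i\rangle < c_i^{-1}$, as wanted. I do not expect a genuine obstacle in this argument: the only points that need a little care are the verification that the Minkowski sum $A$ is open (which relies on the fact that the sum of an open set with an arbitrary set is open) and the passage from the abstract functional supplied by Hahn--Banach to a representing vector of $\mathcal{H}$; both are routine.
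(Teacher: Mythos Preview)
Your proof is correct and follows essentially the same route as the paper: form the Minkowski sum $A=\sum_{j=1}^n c_jA_j$, separate $f$ from it by Theorem~\ref{HB}, and then observe that $c_ig_i\in A$ to obtain the individual bounds. The only cosmetic difference is that you spell out the openness of $A$ and the use of Riesz representation to identify the functional with an element of $\mathcal{H}$, both of which the paper leaves implicit.
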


\begin{proof}
Since the set $A := \sum_{i=1}^n c_i A_i$ will be an open convex set in $\mathcal{H}$ containing $0$ but not $f$, it follows by the Hahn-Banach theorem that there exists some $\phi \in \mathcal{H}$ satisfying $\langle \phi, f \rangle \ge 1$ and $\langle \phi, g \rangle < 1$, for every $g \in A$. The result follows immediately, since $c_i g_i \in A$ for every $g \in A_i$.
\end{proof}

Given a positive real number $\delta$ and some decreasing function $\eta: \mathbb{R}^+ \rightarrow \mathbb{R}^+$ we will consider the sequence $C_1^{\delta, \eta}, \ldots, C_{\lceil 2\delta^{-2} \rceil}^{\delta, \eta}$ defined recursively by 
\begin{equation}
\label{central sequence}
 C_{\lceil 2\delta^{-2} \rceil}^{\delta, \eta} :=1 , C_{n-1}^{\delta, \eta} := \max \left\{ C_n^{\delta, \eta}, 2 \eta(C_n^{\delta, \eta})^{-1} \right\}.
 \end{equation}
We shall also write $C^{\delta, \eta}:=C_1^{\delta,\eta}$. These constants will provide the parameters for the decomposition obtained below and the fact that they are independent of the specific decomposition will allow us to do {\sl a priori} modifications on our set of structured functions so that they are better suited to the resulting bounds.

\medskip
Given some norm $\left\| \cdot \right\|_N$ on $\mathcal{H}$ equivalent to $\left\| \cdot \right\|$, we define its dual norm by
$$ \left\| f \right\|_N^{\ast} := \sup_{\left\| g \right\|_N \le 1} \left| \langle f, g \rangle \right|.$$
Notice that $\left\| \cdot \right\|_N^{\ast}$ is then also equivalent to $\left\| \cdot \right\|$. We will be concerned with the study of an infinite family of norms $\left( \left\| \cdot \right\|_N \right)_{N \in \mathbb{N}}$ measuring increasing rates of structure and for which their dual norms $ \left( \left\| \cdot \right\|_N^{\ast} \right)_{N \in \mathbb{N}}$ measure decreasing rates of randomness. As it turns out, we will need to work with this large family of norms {\sl simultaneously}, so that if we know one of the components is random at a level $A$ (that is, $\left\| \right\|_{A}^{\ast}$ is small), we need the other component to be structured at a much higher level $B$ (that is, $\left\| \right\|_{B}$ must be small for some $B$ much larger than $A$). This is accomplished by the following result.

\begin{prop}
\label{decomposition}
Let $\left( \left\| \cdot \right\|_N \right)_{N \in \mathbb{N}}$ be a family of norms on $\mathcal{H}$ equivalent to $\left\| \cdot \right\|$ and satisfying $\left\| \cdot \right\|_{N+1}^{\ast} \le \left\| \cdot \right\|_{N}^{\ast}$ for every $N$. Let $0 < \delta, c < 1$ be positive real numbers, $\eta: \mathbb{R}^+ \rightarrow \mathbb{R}^+$ some decreasing function and $\psi:\mathbb{N} \rightarrow \mathbb{N}$ some function satisfying $\psi(N) \ge N$ for all $N$. Then, for every integer $M_{\bullet}>0$, there exists a sequence 
$$ M_{\bullet} \le M_1 \le \ldots \le M_{\lceil 2 \delta^{-2} \rceil} \le M^{\bullet}= O_{M_{\bullet},\delta, c, \psi}(1), $$
which does not depend on the specific family of norms, with the property that for any $f \in \mathcal{H}$ with $\| f \| \le 1$, we can find some $1 \le i \le \lceil 2 \delta^{-2} \rceil$ and integers $A,B$ with $M_{\bullet} \le A < c M_i< \psi(M_i) \le B,$ such that we have the decomposition $f=f_1+f_2+f_3$ with 
$$ \| f_1 \|_{B} < C_i^{\delta, \eta}, \| f_2 \|_{A}^{\ast} < \eta(C_i^{\delta, \eta}), \| f_3 \| < \delta.$$ 
\end{prop}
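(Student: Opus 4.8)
The plan is to run an iterative Hahn–Banach argument along a finite nested scale of levels, using Corollary \ref{HB2} to convert "no good decomposition" into the existence of a large functional that is simultaneously structured and random at a controlled pair of levels. The constants $C_i^{\delta,\eta}$ built by the recursion (\ref{central sequence}) are exactly tuned so that the argument can only fail for $\lceil 2\delta^{-2}\rceil$ steps before a pigeonhole on the Hilbert norm forces success. Fix the target $M_\bullet$ and set $M_{\lceil 2\delta^{-2}\rceil}$ first, working downward — we want to define $M_{i-1}$ in terms of $M_i$, so the sequence is built from the top. Concretely, I would proceed by contradiction: suppose for some $f$ with $\|f\|\le 1$ there is no admissible triple $(i,A,B)$ with the stated decomposition.

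First I would fix $i$ and the candidate level $M_i$, and set $A=M_\bullet$ and $B=\psi(M_i)$ initially (any legal choice of $A<cM_i<\psi(M_i)\le B$ will do for the first pass; I will come back and optimize the choice of $A,B$ at the end). Consider the three convex sets $A_1=\{g:\|g\|_B<C_i^{\delta,\eta}\}$, $A_2=\{g:\|g\|_A^\ast<\eta(C_i^{\delta,\eta})\}$, $A_3=\{g:\|g\|<\delta\}$; these are open, convex, and contain $0$ because all the norms are equivalent to $\|\cdot\|$. If $f$ could be written as $g_1+g_2+g_3$ with $g_j\in A_j$ we would be done at step $i$, so by Corollary \ref{HB2} (with $c_1=c_2=c_3=1$, say, or rescaled appropriately) there is a $\phi_i\in\mathcal H$ with $\langle\phi_i,f\rangle\ge 1$ and $\langle\phi_i,g\rangle<1$ for all $g$ in each $A_j$. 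Unpacking the third inequality gives $\|\phi_i\|<\delta^{-1}$ is false — rather, $\langle\phi_i,g\rangle<1$ for $\|g\|<\delta$ forces $\|\phi_i\|\le\delta^{-1}$; the second gives $\|\phi_i\|_A\le\eta(C_i^{\delta,\eta})^{-1}$ (since the polar of an $\ell^\infty$-type ball in the dual norm recovers the primal norm up to the obvious duality), and the first gives $\|\phi_i\|_B^\ast\le (C_i^{\delta,\eta})^{-1}$. So from failure at level $i$ we extract a functional which is highly structured in the $\|\cdot\|_A$-sense and highly random in the $\|\cdot\|_B^\ast$-sense.

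Now I would iterate. Run the argument at $i=\lceil 2\delta^{-2}\rceil$ first; failure gives $\phi$ with $\langle\phi,f\rangle\ge1$ and $\|\phi\|_{B}^\ast$ small. Feed this $\phi$ back in as the new "test vector" at step $i-1$ — more precisely, the standard trick is to look at partial sums $\phi^{(1)}+\dots+\phi^{(k)}$ of the extracted functionals and use the monotonicity hypothesis $\|\cdot\|_{N+1}^\ast\le\|\cdot\|_N^\ast$ together with the gap $A<cM_i<\psi(M_i)\le B$ to ensure the "random at level $B_k$" estimate survives when we later measure at a lower level $A_{k-1}$, provided $B_k\le A_{k-1}$, i.e. provided we choose $\psi(M_i)\le M_\bullet$... which is impossible, so in fact the levels must march \emph{upward}: choose $M_1$ just above $M_\bullet$, then $M_2$ with $cM_2>\psi(M_1)$, and so on, so that $A_k$ can be taken $\ge\psi(M_{k-1})=B_{k-1}$. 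The recursion (\ref{central sequence}) for the $C_i$ is precisely what makes the bound $\|\phi^{(1)}+\dots+\phi^{(k)}\|\ge$ (something growing like $\sqrt{k}\,\delta$) compatible with the upper bound $\|\phi^{(k)}\|\le\delta^{-1}$: pairing $\sum\phi^{(k)}$ with $f$ gives $\langle\sum_k\phi^{(k)},f\rangle\ge k$, while $\|f\|\le1$ forces $\|\sum_k\phi^{(k)}\|\ge k$; expanding $\|\sum_k\phi^{(k)}\|^2$ and bounding the cross terms $\langle\phi^{(j)},\phi^{(k)}\rangle$ for $j<k$ using that $\phi^{(k)}$ is random at a level at or above where $\phi^{(j)}$ is structured (so the cross term is at most $\|\phi^{(j)}\|_{B_j}\|\phi^{(k)}\|_{B_j}^\ast\le C_j^{\delta,\eta}\cdot\eta(C_j^{\delta,\eta})\cdot(\text{const})$, which the recursion makes $\le\tfrac12$) yields $k^2\le\|\sum\phi^{(k)}\|^2\le k\delta^{-2}+2\binom{k}{2}\cdot\tfrac12$, i.e. $k\le 2\delta^{-2}$. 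Once $k$ exceeds $\lceil 2\delta^{-2}\rceil$ we have our contradiction, so the decomposition must succeed at some step $i\le\lceil2\delta^{-2}\rceil$.

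Finally I would go back and clean up the quantifiers: the sequence $M_1\le\dots\le M_{\lceil 2\delta^{-2}\rceil}$ is defined purely in terms of $M_\bullet$, $\delta$, $c$ and $\psi$ (via the constraints $cM_{i+1}>\psi(M_i)$ and $M_1\ge M_\bullet$), hence is independent of the particular family of norms, and $M^\bullet$ can be taken to be $\psi$ applied $\lceil2\delta^{-2}\rceil$ times to $M_\bullet$ rescaled by $c^{-1}$, which is $O_{M_\bullet,\delta,c,\psi}(1)$ as claimed; and for each $f$ the freedom to pick $A$ anywhere in $[M_\bullet,cM_i)$ and $B$ anywhere in $[\psi(M_i),\infty)$ is what we used above. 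The main obstacle, and the delicate point to get right, is the bookkeeping of the two interleaved scales — ensuring that "random at level $B_j$" for the earlier functional $\phi^{(j)}$ is genuinely usable against "structured at level $A_k$" for the later one, which needs $A_k\ge B_j$, and simultaneously that each single application of Hahn–Banach only constrains $\phi^{(k)}$ at the \emph{pair} $(A_k,B_k)$ with $A_k<cM_k<\psi(M_k)\le B_k$; the recursion (\ref{central sequence}) choosing $C_{n-1}=\max\{C_n,2\eta(C_n)^{-1}\}$ is exactly engineered so the cross-term sum telescopes below $\tfrac12$ regardless of which step the eventual success occurs at, and verifying this compatibility is the crux of the proof.
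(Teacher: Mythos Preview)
Your approach is essentially the paper's own: iterate Hahn--Banach on the three open convex balls, extract functionals $\phi_i$ with $\langle\phi_i,f\rangle\ge 1$, $\|\phi_i\|\le\delta^{-1}$, $\|\phi_i\|_{B_i}^\ast\le C_i^{-1}$, $\|\phi_i\|_{A_i}^{\ast\ast}\le\eta(C_i)^{-1}$, and derive a contradiction from $\|\sum\phi_i\|^2$; the construction of the $M_i$ (set $A_j:=B_{j-1}$, $M_j:=\lceil c^{-1}A_j+1\rceil$, $B_j:=\psi(M_j)$) and the energy estimate match.

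One point of bookkeeping is reversed in your middle paragraph and only fixed in the last. For $j<k$ the pairing that works is: the \emph{earlier} functional $\phi^{(j)}$ is random ($\|\phi^{(j)}\|_{B_j}^\ast\le C_j^{-1}$) and the \emph{later} functional $\phi^{(k)}$ is structured ($\|\phi^{(k)}\|_{A_k}^{\ast\ast}\le\eta(C_k)^{-1}$), and since $A_k\ge B_j$ and $\|\cdot\|_{N+1}^\ast\le\|\cdot\|_N^\ast$ one gets
\[
|\langle\phi^{(j)},\phi^{(k)}\rangle|\le \|\phi^{(j)}\|_{A_k}^\ast\,\|\phi^{(k)}\|_{A_k}^{\ast\ast}\le \|\phi^{(j)}\|_{B_j}^\ast\,\eta(C_k)^{-1}\le C_j^{-1}\eta(C_k)^{-1}\le\tfrac12,
\]
the last inequality being exactly what the recursion $C_{k-1}\ge 2\eta(C_k)^{-1}$ guarantees (together with $C_j\ge C_{k-1}$ for $j<k$). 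Your sentence ``$\phi^{(k)}$ is random at a level at or above where $\phi^{(j)}$ is structured'' and the displayed bound $\|\phi^{(j)}\|_{B_j}\|\phi^{(k)}\|_{B_j}^\ast\le C_j\cdot\eta(C_j)\cdot(\text{const})$ have the roles and the exponents swapped; with the pairing as written there the monotonicity of the dual norms goes the wrong way and the estimate does not close. Also, nothing is ``fed back as a test vector'': the $\phi^{(k)}$ are simply accumulated and summed. Once these indices are straightened out (as you do in your final paragraph), the argument is complete and coincides with the paper's.
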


\begin{proof} Our proof is modeled on the proof of Proposition 3.5 of \cite{Gowers}. Set $A_1 :=M_{\bullet}$, $M_1 :=\lceil c^{-1}A_1+1 \rceil$ and $B_1 := \psi(M_1)$. If there is no decomposition of the desired form with these parameters and $C_1 := C_1^{\delta, \eta}$ we may apply Corollary \ref{HB2} to obtain some $\phi_1 \in \mathcal{H}$ such that $\langle \phi_1 ,f \rangle \ge 1$, $\| \phi_1 \|_{B_1}^{\ast} \le C_1^{-1}$, $\| \phi_1 \|_{A_1}^{\ast \ast} \le \eta(C_1)^{-1}$ and $\| \phi_1 \| \le \delta^{-1}$, where we are using the fact that if $\left\| \cdot \right\|_N$ is some norm equivalent to $\left\| \cdot \right\|$, then $\left\{ f \in \mathcal{H} : \left\| f \right\|_N<1 \right\}$ is an open convex set in $\mathcal{H}$ containing $0$.

\medskip
Recursively, if we cannot find a decomposition with parameters $A_{j-1}$, $M_{j-1}$, $B_{j-1}$, $C_{j-1}$ we set $A_j:=B_{j-1}$, $M_j:=\lceil c^{-1} A_j +1 \rceil$, $B_j:=\psi(M_j)$ and $C_j:=C_j^{\delta, \eta}$. If no such decomposition exists with these parameters we can then use Corollary \ref{HB2} to find some $\phi_j \in \mathcal{H}$ with properties analogous to the ones above. This way we construct a sequence of elements obeying the orthogonality relationships
\begin{align*}
 \left| \langle \phi_j , \phi_i \rangle \right| &\le \| \phi_j \|_{A_j}^{\ast \ast} \| \phi_i \|_{A_j}^{\ast} \le \| \phi_j \|_{A_j}^{\ast \ast} \| \phi_i \|_{B_i}^{\ast} \\
 &\le \eta (C_j)^{-1} C_i^{-1} \le 1/2 ,
 \end{align*}
whenever $i<j$, by construction of $C_k$. But then, by the bounds on $\left\| \phi_i \right\|$, we obtain upon expanding the inner product 
 \begin{equation}
 \label{energy}
 \left\| \phi_1 + \ldots + \phi_r \right\|^2 \le \delta^{-2} r + \frac{r^2-r}{2},
 \end{equation}
 for each $r \le \lceil 2 \delta^{-2} \rceil$. On the other hand, the condition $\langle \phi_i , f \rangle \ge 1$ for all $i$ implies that the left-hand side of (\ref{energy}) is at least $r^2$. Since this is absurd for $r=\lceil 2 \delta^{-2} \rceil$ the result follows.
 \end{proof}

Finally, we also prove the following lemma that will be needed later.

\begin{lema}[cf. {\cite[Corollary 3.5]{Gowers}}]
\label{normas}
Let $\Sigma \subseteq \mathcal{H}$ be a bounded set and suppose the norm
\begin{equation}
\label{estructura}
\left\| f \right\|_{\Sigma} := \inf \left\{ \sum_{j=0}^{k-1} |\lambda_j| : f = \sum_{j=0}^{k-1} \lambda_j \sigma_j, \sigma_j \in \Sigma \right\},
\end{equation}
is well defined and equivalent to $\left\| \cdot \right\|$. Then its dual norm is given by $\left\| f \right\|_{\Sigma}^{\ast}=\sup_{\sigma \in \Sigma} \left| \langle f,\sigma \rangle \right|$.
\end{lema}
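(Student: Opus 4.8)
The plan is to prove the two inequalities $\left\| f \right\|_{\Sigma}^{\ast} \le \sup_{\sigma \in \Sigma} \left| \langle f, \sigma \rangle \right|$ and $\left\| f \right\|_{\Sigma}^{\ast} \ge \sup_{\sigma \in \Sigma} \left| \langle f, \sigma \rangle \right|$ separately, both being essentially formal manipulations with the definition of the dual norm. First I would fix $f \in \mathcal{H}$ and set $S := \sup_{\sigma \in \Sigma} \left| \langle f, \sigma \rangle \right|$, which is finite since $\Sigma$ is bounded.

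\emph{The bound $\left\| f \right\|_{\Sigma}^{\ast} \le S$.} Take any $g \in \mathcal{H}$ with $\left\| g \right\|_{\Sigma} \le 1$. By the definition of $\left\| \cdot \right\|_{\Sigma}$ as an infimum, for every $\varepsilon > 0$ there is a representation $g = \sum_{j=0}^{k-1} \lambda_j \sigma_j$ with $\sigma_j \in \Sigma$ and $\sum_{j=0}^{k-1} |\lambda_j| \le 1 + \varepsilon$. Then by the triangle inequality and the definition of $S$,
$$ \left| \langle f, g \rangle \right| \le \sum_{j=0}^{k-1} |\lambda_j| \left| \langle f, \sigma_j \rangle \right| \le S \sum_{j=0}^{k-1} |\lambda_j| \le S(1+\varepsilon). $$
Letting $\varepsilon \to 0$ gives $\left| \langle f, g \rangle \right| \le S$, and taking the supremum over all such $g$ yields $\left\| f \right\|_{\Sigma}^{\ast} \le S$.

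\emph{The bound $\left\| f \right\|_{\Sigma}^{\ast} \ge S$.} For any fixed $\sigma \in \Sigma$, the single-term representation $\sigma = 1 \cdot \sigma$ shows $\left\| \sigma \right\|_{\Sigma} \le 1$. Hence $\sigma$ is an admissible competitor in the supremum defining $\left\| f \right\|_{\Sigma}^{\ast}$, so $\left| \langle f, \sigma \rangle \right| \le \left\| f \right\|_{\Sigma}^{\ast}$. Taking the supremum over $\sigma \in \Sigma$ gives $S \le \left\| f \right\|_{\Sigma}^{\ast}$. Combining the two bounds proves the claim. I do not anticipate a genuine obstacle here; the only point requiring a word of care is checking that $\left\| \cdot \right\|_{\Sigma}$ really is a norm (so that the infimum is over a nonempty set and the dual is well defined), but this is part of the hypothesis, and the boundedness of $\Sigma$ is exactly what guarantees $S < \infty$ so that the statement is meaningful. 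If one wanted to avoid the $\varepsilon$ in the first part, one could alternatively note that $\left\| f \right\|_{\Sigma}^{\ast}$ equals the supremum of $\langle f, \cdot \rangle$ over the closed convex hull of $\Sigma \cup (-\Sigma)$ (the unit ball of $\left\| \cdot \right\|_{\Sigma}$), whose extreme-type points lie in $\Sigma \cup (-\Sigma)$, but the direct computation above is cleaner.
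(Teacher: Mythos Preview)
Your proof is correct and follows essentially the same approach as the paper's: both directions are handled exactly as you do, using $\|\sigma\|_\Sigma \le 1$ for one inequality and an $\varepsilon$-approximate representation $g=\sum_j \lambda_j \sigma_j$ with $\sum_j|\lambda_j|<1+\varepsilon$ for the other. The paper's version is just more terse.
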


\begin{proof}
Given some $f \in \mathcal{H}$ it is clear on one hand that $$\sup_{\sigma \in \Sigma} |\langle f, \sigma \rangle| \le \sup_{\left\| g \right\|_{\Sigma} \le 1} | \langle f,g \rangle |.$$ On the other hand, for every $\epsilon > 0$, if $g= \sum_{j=0}^{k-1} \lambda_j \sigma_j$ with $\sum_{j=0}^{k-1} |\lambda_j| < 1+\epsilon$, then $| \langle f, g \rangle | \le (1+\epsilon) \sup_{\sigma \in \Sigma} |\langle f,\sigma \rangle|$. The result follows.
\end{proof}

\section{Norm convergence for systems of finite complexity}
\label{main}

From now on fix a nilpotent group $G$ and a probability space $X$ as in the statement of Theorem \ref{original}. By a $G$-sequence we shall mean a sequence $\left\{ g(n) \right\}_{n \in \mathbb{Z}}$ taking values in $G$. An ordered tuple ${\bf g} = (g_1, \ldots, g_j)$ of $G$-sequences will be called a system, and for each system one can ask whether the corresponding ergodic averages
\begin{equation}
\label{system averages}
\mathcal{A}_N^{\bf g} [f_1, \ldots, f_j] := \mathbb{E}_{n \in [N]} \prod_{i=1}^j g_i(n) f_i,
\end{equation}
converge in $L^2(X)$ for every $f_1, \ldots, f_j \in L^{\infty}(X)$. Here, for a finite set $A$ we write $\mathbb{E}_{x \in A} f(x) := \frac{1}{|A|} \sum_{x \in A} f(x)$ and for every positive integer $N$ it is $[N] := \left\{ 1, \ldots, N \right\}$. We say two systems are equivalent if they consist of the same $G$-sequences, so for example if $g,h$ are $G$-sequences then the system $(h,g)$ is equivalent to the system $(g,h)$, and so is $(g,h,h)$. Clearly, the convergence of the averages of the form (\ref{system averages}) for some system implies the convergence of the averages associated to every equivalent system, since $T(f_1) T(f_2)= T(f_1 f_2)$ for every $T \in G$ and $f_1, f_2 \in L^{\infty}(X)$.

\medskip
To each pair of $G$-sequences $g,h$ we will associate, for each positive integer $m$, the $G$-sequence
$$ \langle g | h \rangle_m (n) := g(n) g(n+m)^{-1} h(n+m),$$
and we define the $m$-reduction of a system ${\bf g}=(g_1, \ldots, g_j)$ to be the system
$$ {\bf g}_m^{\ast} = (g_1, \ldots, g_{j-1}, \langle g_j | 1_G \rangle_m, \langle g_j | g_1 \rangle_m, \ldots, \langle g_j | g_{j-1} \rangle_m),$$
where by a slight abuse of notation we write $1_G$ for the $G$-sequence $1_G(n):=1_G$, where $1_G$ is the identity of $G$. The main purpose of this section will be to show that one can deduce the convergence of the averages (\ref{system averages}) for some system ${\bf g}$ from knowing this (actually, the slightly stronger Theorem \ref{principal} below) for every reduction ${\bf g}_m^{\ast}$ of ${\bf g}$. This leads us to define the complexity of a system.

\begin{defi}[Complexity of a system]
We say a system ${\bf g}$ has complexity $0$ if it is equivalent to the trivial system $(1_G)$ (that is, the system consisting only on the sequence $1_G$). Recursively, we say a system ${\bf g}$ has complexity $d$, for some positive integer $d \ge 1$, if it is not of complexity $d'$ for any $0 \le d' < d$ and it is equivalent to some system ${\bf h}$ for which every reduction ${\bf h}_m^{\ast}$ has complexity $\le d-1$. We say a system has finite complexity if it has complexity $d$ for some integer $d \ge 0$.
\end{defi}

Given a system ${\bf g}=(g_1, \ldots, g_j)$, some set of functions $f_1, \ldots, f_j \in L^{\infty}(X)$ and a pair of integers $N,N'$, write
$$ \mathcal{A}_{N,N'}^{\bf g}[f_1, \ldots, f_j] := \mathcal{A}_{N'}^{\bf g}[f_1, \ldots, f_j] - \mathcal{A}_{N}^{\bf g}[f_1, \ldots, f_j].$$
We have the following result.

\begin{teo}
\label{principal}
Let $G$ and $X$ be as above and let $d \ge 0$. Let $F: \mathbb{N} \rightarrow \mathbb{N}$ be some non-decreasing function with $F(N) \ge N$ for all $N$ and let $\varepsilon > 0$ be some positive real number. Then, for every integer $M > 0$, there exists a sequence of integers 
\begin{equation}
\label{sequence0}
M \le M_1^{\varepsilon,F,d} \le \ldots \le M_{K_{\varepsilon, d}}^{\varepsilon,F,d} \le M^{\varepsilon, F,d}=O_{d, F,\varepsilon, M}(1),
\end{equation}
for some $K_{\varepsilon,d} = O_{\varepsilon,d}(1)$, such that for every system ${\bf g}=(g_1,\ldots,g_j)$ of complexity at most $d$ and every choice of functions $f_1, \ldots, f_j \in L^{\infty}(X)$ with $\left\| f_i \right\|_{\infty} \le 1$, there exists some $1 \le i \le K_{\varepsilon,d}$ such that
\begin{equation}
\label{finite average}
\left\| \mathcal{A}_{N,N'}^{\bf g} [f_1, \ldots, f_j] \right\|_{L^2(X)} \le \varepsilon,
\end{equation}
for every $M_i^{\varepsilon, F,d} \le N,N' \le F(M_i^{\varepsilon,F,d})$.
\end{teo}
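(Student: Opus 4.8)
The plan is to prove Theorem~\ref{principal} by induction on the complexity $d$, using Proposition~\ref{decomposition} to split the functions appearing in the averages into structured and random parts, and then treating each part by a separate argument.

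\textbf{Base case.} When $d=0$ the system $\mathbf{g}$ is equivalent to the trivial system $(1_G)$, so the averages $\mathcal{A}_N^{\mathbf g}[f]$ equal $\mathbb{E}_{n\in[N]} f$, which is independent of $N$; hence $\mathcal{A}_{N,N'}^{\mathbf g}[f]=0$ and any choice of the sequence works. (One must also allow equivalent systems of the form $(1_G,\dots,1_G)$, which collapse by the multiplicativity remark $T(f_1)T(f_2)=T(f_1f_2)$.)

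\textbf{Inductive step.} Assume the statement for complexity $\le d-1$. Fix a system $\mathbf{g}=(g_1,\dots,g_j)$ of complexity $d$; we may pass to an equivalent $\mathbf{h}$ every reduction $\mathbf{h}_m^\ast$ of which has complexity $\le d-1$. The key identity to exploit is a van der Corput / telescoping manoeuvre: the difference of two averages $\mathcal{A}_{N,N'}^{\mathbf g}[f_1,\dots,f_j]$ can be controlled by expressions whose $n$-shifts involve the products $g_i(n)g_i(n+m)^{-1}$, i.e.\ exactly the sequences $\langle g_j\mid g_i\rangle_m$ and $\langle g_j\mid 1_G\rangle_m$ making up the $m$-reduction $\mathbf{g}_m^\ast$. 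Concretely, one introduces a suitable family of seminorms $\|\cdot\|_N$ on $L^2(X)$ — built out of the reduced averages $\mathcal{A}^{\mathbf g_m^\ast}$ averaged over $m$ — measuring how ``reducible'' a function is relative to the system, and by Lemma~\ref{normas} its dual norm measures anti-correlation with the corresponding bounded set $\Sigma$ of $L$-reducible functions (this is where the notion flagged in the introduction, Definition~\ref{symmetric}, enters). The monotonicity hypothesis $\|\cdot\|_{N+1}^\ast\le\|\cdot\|_N^\ast$ needed for Proposition~\ref{decomposition} will hold by taking averages over longer ranges of $m$. Applying Proposition~\ref{decomposition} to $f_j$ with appropriate $\delta,c,\psi,\eta$ (the function $\eta$ and the shift $\psi$ chosen afterwards, using that the constants $C_i^{\delta,\eta}$ and the scales $M_i$ do not depend on the norm family — precisely the ``\emph{a priori} modifications'' remarked after \eqref{central sequence}), we write $f_j=f_j^{(1)}+f_j^{(2)}+f_j^{(3)}$ at some scale index $i$. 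The term $f_j^{(3)}$ has $\|f_j^{(3)}\|<\delta$ and contributes $O(\delta)$ to the averages by Cauchy--Schwarz and the $L^\infty$ bounds on the other $f_i$. The term $f_j^{(2)}$ is ``random'': its small dual norm $\|f_j^{(2)}\|_A^\ast<\eta(C_i)$ combined with the van der Corput estimate forces $\|\mathcal{A}_{N,N'}^{\mathbf g}[f_1,\dots,f_{j-1},f_j^{(2)}]\|$ to be small for $N,N'$ in the relevant window, provided $\eta$ was chosen decaying fast enough. The term $f_j^{(1)}$ is ``structured'': $\|f_j^{(1)}\|_B<C_i$ means $f_j^{(1)}$ is a bounded linear combination of $L$-reducible functions at scale $B$, so $g_j(n)f_j^{(1)}$ can be rewritten (modulo controlled error) in terms of the shifts $\langle g_j\mid g_i\rangle_m(n)\,b_i$ of prescribed functions $b_i$, thereby expressing $\mathcal{A}_N^{\mathbf g}[f_1,\dots,f_{j-1},f_j^{(1)}]$ as an average of averages attached to the reductions $\mathbf{g}_m^\ast$, to which the inductive hypothesis (for complexity $\le d-1$) applies uniformly in $m$, yielding a common good window of scales. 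Intersecting the finitely many ($K_{\varepsilon,d}=O_{\varepsilon,d}(1)$) candidate windows produced by Proposition~\ref{decomposition} with those coming from induction gives the desired sequence \eqref{sequence0}, and the uniformity of all bounds in the system $\mathbf g$ (only on $j$, which is itself bounded in terms of $d$) is what makes the finitely many scales suffice.

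\textbf{Main obstacle.} The delicate point is the bookkeeping of scales and quantifiers: one needs the random estimate for $f_j^{(2)}$ to kick in at scale $A$, the structured decomposition of $f_j^{(1)}$ to be usable at scale $B\gg A$, and the inductive hypothesis for the reductions to supply a good window inside $[M_i,F(M_i)]$ — all simultaneously, with constants independent of the particular $\mathbf g$. This is exactly what Proposition~\ref{decomposition} is engineered for (the gap between $A$ and $B$, governed by $\psi$, and the freedom to postpone the choice of $\eta$), but threading the van der Corput inequality and the definition of $L$-reducibility through it so that the scale $B$ produced is large enough to absorb the error terms from rewriting $g_j(n)f_j^{(1)}$ will require care. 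A secondary subtlety is verifying that the seminorm \eqref{estructura} attached to the set of $L$-reducible functions is genuinely equivalent to $\|\cdot\|$ (so that Lemma~\ref{normas} and Proposition~\ref{decomposition} apply) — this should follow from boundedness of $\Sigma$ together with the fact that $\Sigma$ spans a dense, in fact all of, $L^2(X)$ in the relevant finite-dimensional reductions, but it is the kind of hypothesis that must be checked rather than assumed.
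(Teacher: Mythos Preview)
Your outline follows the paper's proof closely: induction on complexity, Proposition~\ref{decomposition} applied to $f_j$, and the three-way split into structured, random, and small-$L^2$ pieces handled respectively by the induction hypothesis on reductions, the weak inverse Lemma~\ref{inverso}, and a trivial estimate. Two technical points in your sketch are off and would need correction before the argument goes through.

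First, the norms $\|\cdot\|_L$ are not built from averages over $m$ of reduced averages; they are the norms $\|\cdot\|_{\Sigma_L^+}$ of \eqref{estructura} attached to the set $\Sigma_L^+ = \Sigma_L \cup B_2(\delta/C^{\ast})$, where $\Sigma_L$ is the set of $L$-reducible functions. The small $L^2$-ball is precisely what makes the norm equivalent to $\|\cdot\|_{L^2}$ --- your proposed justification via density or finite-dimensional reductions does not work, and without the ball Proposition~\ref{decomposition} cannot be invoked. Monotonicity $\|\cdot\|_{L+1}^{\ast}\le\|\cdot\|_L^{\ast}$ then comes from the inclusion $\Sigma_{L+1}^+\subseteq\Sigma_L^+$ (a function that is $(L+1)$-reducible is also $L$-reducible), not from averaging over longer ranges of $m$.

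Second, you omit a step that the paper treats carefully: the random piece $u=f_j^{(2)}$ has no \emph{a priori} $L^{\infty}$ bound, but Lemma~\ref{inverso} requires $\|u\|_{\infty}\le 3C$. The paper fixes this by restricting to the set $S=\{|v|\le C_i\}$, showing $\|u\mathbf{1}_S\|_{\infty}\le 3C_i$ and $\|u\mathbf{1}_{S^c}\|_2\le 3\|v\|_2$, and then absorbing $u\mathbf{1}_{S^c}$ into the small-$L^2$ part. Without this truncation the contrapositive of Lemma~\ref{inverso} is not available for $u$.
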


This type of statement already appears in the works of Tao \cite{Tao} and of Avigad, Gerhardy and Towsner \cite{AGT}. Clearly, Theorem \ref{principal} implies that the averages (\ref{system averages}) converge in $L^2(X)$ for every system ${\bf g}$ of finite complexity, since otherwise one could find some $\varepsilon > 0$ and some increasing function $F: \mathbb{N} \rightarrow \mathbb{N}$ such that $\left\| \mathcal{A}_{N,F(N)}^{\bf g} [f_1, \ldots, f_d] \right\|_{L^2(X)} > \varepsilon$ for every integer $N$. The usefulness of Theorem \ref{principal} lies on its uniformity over all systems of a fixed complexity, which plays an important role in the inductive argument. In fact, the ergodic averages (\ref{system averages}) associated to a system ${\bf g}$ for which the reductions ${\bf g}_m^{\ast}$ do not satisfy stability bounds which are uniform on $m$ may not necessarily converge, even if the ergodic averages associated to each individual reduction ${\bf g}_m^{\ast}$ do converge. 

\medskip
The rest of this section is devoted to the proof of Theorem \ref{principal}. In \S \ref{nilpotent} we will show that every system of the form given in Theorem \ref{original} has finite complexity, thereby completing the proof of that theorem.

\subsection{$L$-reducible functions}
\label{reducible}

Since Theorem \ref{principal} is trivially true when $d=0$, we may proceed by induction. Thus, let $d > 0$ be some positive integer and assume the result holds for every $d' < d$. Let $F$ and $0 < \varepsilon < 1$ be as in the statement of the theorem and let ${\bf g}=(g_1,\ldots, g_j)$ be some system of complexity at most $d$. Since it clearly sufficies to prove the result for any system equivalent to ${\bf g}$, by definition of the complexity we may assume without lost of generality that ${\bf g}_m^{\ast}$ has complexity $\le d-1$ for every positive integer $m$.

\medskip
Let $C^{\ast}$ denote the quantity $C^{\delta,\eta}$ defined in (\ref{central sequence}) associated to $\delta := \varepsilon/(2^5 3)$ and $\eta(x):=\varepsilon^2/(2^3 3^3 x)$, so that in particular $C^{\ast}$ depends only on $\varepsilon$. We will sometimes use the shorthands $\left\| \cdot \right\|_{\infty}$ for $\left\| \cdot \right\|_{L^{\infty}(X)}$, $\left\| \cdot \right\|_2$ for $\left\| \cdot \right\|_{L^2(X)}$ and $\langle \cdot , \cdot \rangle$ for $\langle \cdot , \cdot \rangle_{L^2(X)}$. The following definition will be crucial.

\begin{defi}[reducible functions]
\label{symmetric}
Given a positive integer $L$, we say $\sigma \in L^{\infty}(X)$, $\left\| \sigma \right\|_{\infty} \le 1$, is an $L$-reducible function (with respect to ${\bf g}$), if there exists some integer $M > 0$ and a family $b_0,b_1,\ldots,b_{j-1} \in L^{\infty}(X)$ with $\left\| b_i \right\|_{\infty} \le 1$, such that for every positive integer $l \le L$ 
$$\left\| g_j(l)\sigma- \mathbb{E}_{m \in [M]}  \left( \langle g_j | 1_G \rangle_m(l) \right)b_0 \prod_{i=1}^{j-1} \left( \langle g_j|g_i \rangle_m(l) \right) b_i \right\|_{L^{\infty}(X)} < \frac{\varepsilon}{16C^{\ast}}.$$
\end{defi}

Reducible functions will play a similar role than the one played by basic anti-uniform functions in \cite{Tao}. We stress that we do not care for the value of $M$ in Definition \ref{symmetric}. We will show in Lemma \ref{inverso} below that every function giving rise to a large average must resemble a reducible function. The main feature of these objects is that the role of the $G$-sequence $g_j$ on the averages (\ref{system averages}) can essentially be recovered by means of the set of $G$-sequences $\langle g_j | 1_G \rangle_m, \langle g_j | g_1 \rangle_m, \ldots, \langle g_j | g_{j-1} \rangle_m$. 

\begin{lema}[Weak inverse result for ergodic averages]
\label{inverso}
Assume the inequality $\left\| \mathcal{A}_N^{\bf g} [f_1, \ldots, f_{j-1},u] \right\|_2 > \varepsilon/6$ holds for some $\left\| u \right\|_{L^{\infty}(X)} \le 3C$, some $1 \le C \le C^{\ast}$ and some $f_1, \ldots, f_{j-1} \in L^{\infty}(X)$ with $\left\| f_i \right\|_{\infty} \le 1$. Then, there exists some constant $0 < c_1 <1$, depending only on $\varepsilon$, such that for every positive integer $L < c_1 N$ there is an $L$-reducible function $\sigma$ with $\langle u, \sigma \rangle > 2\eta(C)$.
\end{lema}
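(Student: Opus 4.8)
The statement is a weak inverse theorem: a large ergodic average forces the last function $u$ to correlate with a reducible function. The natural strategy is a duality/averaging argument in $L^2(X)$. Expand the hypothesis $\left\| \mathcal{A}_N^{\bf g}[f_1,\ldots,f_{j-1},u] \right\|_2 > \varepsilon/6$ by writing the square of the $L^2$ norm as an inner product of the average with itself, thereby introducing a shift parameter $m$. Concretely, $\left\| \mathcal{A}_N^{\bf g}[f_1,\ldots,f_{j-1},u]\right\|_2^2$ equals $\mathbb{E}_{n,n'\in[N]} \langle \prod_i g_i(n)f_i \cdot g_j(n)u, \ \prod_i g_i(n')f_i \cdot g_j(n')u\rangle$; after translating by $g_j(n)^{-1}$ (using that the $g_i$ are measure preserving and $T(f)T(h)=T(fh)$), and setting $m=n'-n$, one should be able to rearrange this into an average over $m$ and $n$ of an inner product of the form $\langle g_j(n) u, \ (\text{something involving }m,n)\cdot \overline{\text{something}}\rangle$, where the ``something'' factors through the $G$-sequences $\langle g_j|g_i\rangle_m$. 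This is precisely where the algebraic identity $\langle g|h\rangle_m(n)=g(n)g(n+m)^{-1}h(n+m)$ is tailored to make the cross terms collapse.

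\textbf{Key steps.} First I would fix the range of $m$: since we want an $L$-reducible function with $L<c_1 N$, we only care about shifts $m$ in a window of size comparable to $N$, and we choose $c_1$ so that restricting to such $m$ costs us only a bounded fraction of the total average (a standard van der Corput--type truncation). Second, after the expansion and translation, the inner sum over $n\in[N]$ (or over a length-$L$ subinterval, for the reducibility estimate) produces, for each fixed $m$, a function of the form $\mathbb{E}_{m}\big(\langle g_j|1_G\rangle_m(n)\big)b_0 \prod_{i=1}^{j-1}\big(\langle g_j|g_i\rangle_m(n)\big)b_i$, with $b_0$ essentially a translate of $\bar u$ and $b_i$ essentially translates of $\bar f_i$ — exactly the shape appearing in Definition~\ref{symmetric}. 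Third, from the hypothesis that the full average is $>\varepsilon/6$ in norm, a pigeonhole/positivity argument extracts a single candidate $\sigma$ (an average over $m$ in the chosen window of these products, suitably normalized and truncated to have $\|\sigma\|_\infty\le 1$) such that $\langle u,\sigma\rangle$ is bounded below. The quantitative target $\langle u,\sigma\rangle>2\eta(C)$ and the bound $\|u\|_\infty\le 3C$ dictate the precise constants: since $\eta(x)=\varepsilon^2/(2^3 3^3 x)$ decays like $1/x$, the factor $C$ in the lower bound is exactly what one loses by normalizing $u$, so the inequality $\langle u,\sigma\rangle > \varepsilon/6 \cdot (\text{something})/(3C) > 2\eta(C)$ should hold after tracking constants. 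Finally, the reducibility inequality itself — that $g_j(l)\sigma$ is within $\varepsilon/(16C^\ast)$ of the corresponding $m$-average in $L^\infty$ for all $l\le L$ — follows because $\sigma$ was built as exactly such an average and the only error comes from the shift by $l\le L<c_1N$, which perturbs the $n$-average over $[N]$ by a proportion $O(L/N)=O(c_1)$; choosing $c_1$ small relative to $\varepsilon/C^\ast$ closes this.

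\textbf{Main obstacle.} The delicate point is the bookkeeping that simultaneously controls \emph{three} parameters: the number of $m$-shifts we are allowed to use (which must stay $O(N)$ so that $L$ can be taken proportional to $N$), the $L^\infty$-normalization of the extracted $\sigma$ (which is what introduces the $C$ in $2\eta(C)$ and must not blow up), and the uniformity of the reducibility estimate over all $l\le L$ (which needs the window of $m$'s to overlap stably under translation by $l$). Getting the Cauchy--Schwarz/van der Corput step to produce an expression in which the $b_i$ depend only on $m$ and not on $l$ — so that a single family $b_0,\ldots,b_{j-1}$ works for the whole range $l\le L$ — is the technical heart, and is the reason the definition of $\langle g|h\rangle_m$ has the specific form it does. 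I expect the rest to be routine inequalities, with all constants (the $2^5\cdot 3$, $2^3\cdot 3^3$, $16$, $1/6$) reverse-engineered precisely to make these three estimates line up.
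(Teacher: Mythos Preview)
Your overall direction---duality in $L^2$, the algebraic role of $\langle g_j|g_i\rangle_m$, and a shift argument to verify reducibility---matches the paper, but you are overcomplicating the argument, and this leads to some confusion in the details. The paper's proof uses no van der Corput step, no Cauchy--Schwarz, and no pigeonhole over shifts.

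The simplification you are missing is that one need not expand $\|\mathcal{A}_N^{\bf g}\|_2^2$ into a genuine double sum over $n,n'$ and then introduce $m=n'-n$. Instead, write $\|\mathcal{A}_N^{\bf g}[f_1,\ldots,f_{j-1},u]\|_2^2 = \langle u, h\rangle$ in one stroke, where
\[
h := \mathbb{E}_{n\in[N]}\, g_j(n)^{-1}\,\mathcal{A}_N^{\bf g}[f_1,\ldots,f_{j-1},u]\, \prod_{i=1}^{j-1} g_j(n)^{-1}g_i(n)f_i,
\]
keeping the full average $\mathcal{A}_N^{\bf g}$ bundled as a single bounded function (note $\|\mathcal{A}_N^{\bf g}\|_\infty \le 3C$). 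Then $\sigma := h/(3C)$ is already the desired reducible function: the bound $\langle u,\sigma\rangle > (\varepsilon/6)^2/(3C) = 2\eta(C)$ is immediate, and for the reducibility one simply shifts the averaging variable $n \mapsto n+l$ in the definition of $h$ (at $L^\infty$ cost $O(lC^\ast/N)$, which is what fixes $c_1$) and then applies $g_j(l)$. The variable $n$ in this single sum plays the role of $m$ in Definition~\ref{symmetric}, with $M=N$, $b_0 = \mathcal{A}_N^{\bf g}[f_1,\ldots,f_{j-1},u]/(3C)$, and $b_i = f_i$ for $1\le i\le j-1$.

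In particular, your description of the $b_i$ as ``translates of $\bar f_i$'' and of $b_0$ as ``a translate of $\bar u$'' is off, as is the concern that the $b_i$ might depend on $m$: in Definition~\ref{symmetric} the $b_i$ are fixed functions independent of \emph{both} $m$ and $l$, and in the actual proof they are literally $b_i=f_i$, while $b_0$ is the normalized average itself rather than any translate of $u$. The ``main obstacle'' you identify---arranging that a single family $b_0,\ldots,b_{j-1}$ works for all $l\le L$---dissolves once you keep $\mathcal{A}_N^{\bf g}$ intact as $b_0$ instead of unpacking it into a second sum.
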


\begin{proof}
We begin by noticing that $\left\| \mathcal{A}_N^{\bf g} [f_1, \ldots, f_{j-1},u] \right\|_2^2 = \langle u, h \rangle$, where
\begin{equation}
\label{shift1}
 h:= \mathbb{E}_{n \in [N]} g_j(n)^{-1} \mathcal{A}_N^{\bf g} [f_1, \ldots, f_{j-1},u] \prod_{i=1}^{j-1} g_j(n)^{-1} g_i(n) f_i.
 \end{equation}
We claim $\sigma := h/3C$ is an $L$-reducible function for every $L<c_1 N$ and some $0 < c_1 < 1$ depending only on $\varepsilon$, from where the result immediately follows since by the observation above it is $\langle u,\sigma \rangle > 2 \eta(C)$. 

\medskip
It remains to prove this claim. Write $c_1:=\frac{\varepsilon}{96 (C^{\ast})^{2}}$ and assume $0 < l < c_1N$. Then, if we shift $[N]$ to $[N]+l$ we see that the right hand side of (\ref{shift1}) changes by a magnitude of at most $6lC^{\ast}/N < \varepsilon/(16 C^{\ast})$ (since $\left\| \mathcal{A}_N^{\bf g} [f_1, \ldots, f_{j-1},u] \right\|_{\infty} \le 3C \le 3C^{\ast}$) and thus
$$ \left\| h -\mathbb{E}_{n \in [N]} g_j(l+n)^{-1} \mathcal{A}_N^{\bf g} [f_1, \ldots, f_{j-1},u] \prod_{i=1}^{j-1} g_j(l+n)^{-1} g_i(l+n) f_i \right\|_{L^{\infty}(X)} < \frac{\varepsilon}{16 C^{\ast}}.$$
Applying $g_j(l)$ we get
$$ \left\| g_j(l)h -\mathbb{E}_{n \in [N]} \left( \langle g_j | 1_G \rangle_n(l) \right) \mathcal{A}_N^{\bf g} [f_1, \ldots, f_{j-1},u] \prod_{i=1}^{j-1} \left( \langle g_j | g_i \rangle_n(l) \right) f_i \right\|_{L^{\infty}(X)} < \frac{\varepsilon}{16 C^{\ast}}.$$
The claim then follows with $M :=N$, $b_0 :=  \frac{1}{3C} \mathcal{A}_N[f_1, \ldots, f_{j-1},u]$ and $b_i := f_i$.
\end{proof}

As mentioned early, the advantage of $L$-reducible functions is that they allow us to reduce the study of the ergodic averages of ${\bf g}$ to the study of averages arising from the reductions ${\bf g}_m^{\ast}$, which we already know to satisfy uniform stability bounds by the induction hypothesis. This idea is carried out in the next proposition.

\begin{prop}
\label{low dim}
For every positive integer $M_{\ast}$ there exists a sequence 
\begin{equation}
\label{sequence}
M_{\ast} \le M_1 \le \ldots \le  M_{\tilde{K}} \le M^{\ast} = O_{M_{\ast},\varepsilon,d,F}(1),
\end{equation}
depending only on $M_{\ast}, \varepsilon, d$ and $F$, and with $\tilde{K}$ depending only on $\varepsilon$ and $d$, such that if $f_1, \ldots, f_{j-1} \in L^{\infty}(X)$ with $\left\| f_i \right\|_{\infty} \le 1$ and $f= \sum_{t=0}^{k-1} \lambda_t \sigma_t$, where $\sum_{t=0}^{k-1} |\lambda_t| \le C^{\ast}$ and each $\sigma_t$ is an $L$-reducible function for some $L \ge F(M^{\ast})$, then there exists some $1 \le i \le \tilde{K}$ such that 
$$ \left\| \mathcal{A}_{N,N'}^{\bf g} [f_1, \ldots, f_{j-1}, f] \right\|_{L^2(X)} \le \varepsilon/4,$$
for every pair $M_i \le N, N' \le F(M_i)$.
\end{prop}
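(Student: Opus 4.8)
The plan is to expand the product $\mathcal{A}_{N,N'}^{\bf g}[f_1,\ldots,f_{j-1},f]$ by linearity in the last slot, writing it as $\sum_{t=0}^{k-1}\lambda_t\,\mathcal{A}_{N,N'}^{\bf g}[f_1,\ldots,f_{j-1},\sigma_t]$, and to handle each reducible $\sigma_t$ by substituting the defining approximation from Definition \ref{symmetric}. First I would fix, for each $\sigma_t$, the integer $M^{(t)}$ and the functions $b_0^{(t)},\ldots,b_{j-1}^{(t)}$ witnessing $L$-reducibility, where $L\ge F(M^{\ast})$ will be chosen large enough (via the sequence being constructed) that $L$ exceeds all the ranges $N,N'\le F(M_i)$ we need to control. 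The key point is that for $l\le L$ the quantity $g_j(l)\sigma_t$ is within $\varepsilon/(16C^{\ast})$ in $L^\infty$ of $\mathbb{E}_{m\in[M^{(t)}]}(\langle g_j|1_G\rangle_m(l))\,b_0^{(t)}\prod_{i=1}^{j-1}(\langle g_j|g_i\rangle_m(l))\,b_i^{(t)}$; replacing $g_j(l)\sigma_t$ by this average inside $\mathcal{A}_N^{\bf g}$ and $\mathcal{A}_{N'}^{\bf g}$ incurs a total error of at most $2\cdot\varepsilon/(16C^{\ast})$ per term, hence at most $\varepsilon/(4C^{\ast})$ after summing with weights $\sum|\lambda_t|\le C^{\ast}$ --- comfortably inside the target $\varepsilon/4$ with room to spare for the main term.

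Next, after the substitution, $\mathcal{A}_N^{\bf g}[f_1,\ldots,f_{j-1},\sigma_t]$ becomes (up to the above error) $\mathbb{E}_{m\in[M^{(t)}]}\,\mathcal{A}_N^{{\bf g}_m^{\ast}}[f_1,\ldots,f_{j-1},b_0^{(t)},b_1^{(t)},\ldots,b_{j-1}^{(t)}]$, by definition of the $m$-reduction ${\bf g}_m^{\ast}=(g_1,\ldots,g_{j-1},\langle g_j|1_G\rangle_m,\langle g_j|g_1\rangle_m,\ldots,\langle g_j|g_{j-1}\rangle_m)$ --- here one uses $T(f_1)T(f_2)=T(f_1f_2)$ to merge the $\prod_{i=1}^{j-1}g_i(n)f_i$ factors with the $\langle g_j|g_i\rangle_m(n)b_i$ factors appropriately, noting that $\langle g_j|g_i\rangle_m(n)=g_j(n)g_j(n+m)^{-1}g_i(n+m)$ so that $g_i(n)f_i\cdot\langle g_j|g_i\rangle_m(n)b_i$ is a single transformation applied to a bounded function (and the $b_0$ factor pairs with $\langle g_j|1_G\rangle_m$). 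Since ${\bf g}_m^{\ast}$ has complexity $\le d-1$ for every $m$ (we assumed this for ${\bf g}$ at the start of \S\ref{reducible}), the induction hypothesis --- i.e. Theorem \ref{principal} for $d-1$ --- applies. The crucial subtlety is that the induction hypothesis produces, for each fixed ${\bf g}_m^{\ast}$ and each tuple of bounded functions, a sequence of scales at which stability holds; but these scales and the exceptional index depend on $m$ and on $t$. To get a single sequence $M_{\ast}\le M_1\le\cdots\le M_{\tilde K}\le M^{\ast}$ working uniformly, I would apply Theorem \ref{principal} (for $d-1$, with error parameter $\varepsilon/8$ say, and an appropriately inflated function $F$) iteratively: run it once to get a candidate sequence, then for the "bad" branch feed the top of that sequence back in as the new starting point $M$, repeating $K_{\varepsilon/8,d-1}$ many times; a pigeonhole/telescoping over the finitely many ($O_{\varepsilon,d}(1)$) branches produces the desired finite sequence and the bound $\tilde K=O_{\varepsilon,d}(1)$, $M^{\ast}=O_{M_{\ast},\varepsilon,d,F}(1)$. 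This is exactly the mechanism by which such "finite convergence" statements compose, and it is the step I expect to require the most care.

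The main obstacle, then, is not any single estimate but the bookkeeping of uniformity: one must choose the target scale $L\ge F(M^{\ast})$ for $L$-reducibility, the starting point $M_{\ast}$, and the inflation of $F$ in the induction hypothesis in a mutually consistent order, so that (i) $L$ is large enough that the Definition \ref{symmetric} approximation is valid on all relevant scales $N,N'\le F(M_i)\le F(M^{\ast})\le L$, and (ii) the scales output by the $(d-1)$-level theorem, applied to the at most $C^{\ast}$-many (really: to each of the $b^{(t)}$-tuples, but the output sequence is the same for all tuples since Theorem \ref{principal} is uniform over systems and functions) reductions, can be amalgamated into one sequence independent of $f$, the $\sigma_t$, the $\lambda_t$, and even $k$. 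Once the quantifier order is fixed correctly, combining the error $\varepsilon/(4C^{\ast})\cdot C^{\ast}\le\varepsilon/4$ from the reducibility substitution --- wait, this already saturates the budget, so in fact I would use $\varepsilon/(16C^{\ast})$ more carefully: the substitution error is $2\sum_t|\lambda_t|\cdot\varepsilon/(16C^{\ast})\le\varepsilon/8$, and the induction hypothesis is invoked with parameter $\varepsilon/(8C^{\ast})$ so that after weighting by $\sum|\lambda_t|\le C^{\ast}$ it contributes another $\varepsilon/8$, for a total of $\varepsilon/4$ --- with the triangle inequality then yields the claim.
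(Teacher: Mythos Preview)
Your approach is the paper's: expand linearly in the last slot, substitute the reducibility approximation to turn each $\mathcal{A}_N^{\bf g}[\ldots,\sigma_t]$ into $\mathbb{E}_{m\in[M^{(t)}]}\mathcal{A}_N^{{\bf g}_m^{\ast}}[f_1,\ldots,f_{j-1},b_0^{(t)},\ldots,b_{j-1}^{(t)}]$ up to an $L^\infty$ error totalling $\varepsilon/8$, and then iterate the $(d-1)$-case with pigeonhole to find a common good scale. One minor slip: $g_i(n)f_i\cdot\langle g_j|g_i\rangle_m(n)b_i^{(t)}$ is \emph{not} a single transformation applied to a product, since $g_i(n)$ and $\langle g_j|g_i\rangle_m(n)=g_j(n)g_j(n+m)^{-1}g_i(n+m)$ are different elements of $G$; no merging occurs, and the substituted product is already literally the ${\bf g}_m^{\ast}$-average with its $2j-1$ factors.

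The real gap is in your final accounting. You invoke the induction hypothesis with parameter $\varepsilon/(8C^{\ast})$ and then claim the main term contributes $\sum_t|\lambda_t|\cdot\varepsilon/(8C^{\ast})\le\varepsilon/8$. This presumes that \emph{every} reduced average, over all $t$ and all $m\in[M^{(t)}]$, is bounded by $\varepsilon/(8C^{\ast})$ on a single common interval---but there are in general unboundedly many such $(t,m)$, and the exceptional index $i$ in Theorem~\ref{principal} varies among them, so no finite iteration achieves this. The paper instead sets $\gamma=\varepsilon/(16C^{\ast})$ and runs $r$ nested applications of the $(d-1)$-case, with $F$ replaced by iterated inflations $F_r=F$, $F_{i-1}(N)=\max_{M\le N}F_i(M^{\gamma,F_i,d-1})$ so that each successive interval sits inside the previous one. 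At each round, pigeonhole on the weight selects some $i_s$ for which the still-uncontrolled weight drops by a factor $(K-1)/K$; after $r$ rounds the residual weight is at most $((K-1)/K)^r C^{\ast}$, made $<\varepsilon/16$ by taking $r=O_{\varepsilon,d}(1)$ (not $r=K$ as you wrote). That residual is then handled by the trivial bound $\|\mathcal{A}_{N,N'}^{{\bf g}_m^{\ast}}[\ldots]\|_{L^\infty}=O(1)$, and the controlled weight contributes at most $C^{\ast}\gamma=\varepsilon/16$. The output sequence~\eqref{sequence} consists of all $M_{\ast}^{(i_1,\ldots,i_r)}$ over $(i_1,\ldots,i_r)\in\{1,\ldots,K\}^r$, so $\tilde K=K^r$. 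Your sketch gestures at this (``pigeonhole over the branches'') but the mechanism you describe---bounding everything uniformly---will not close.
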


\begin{proof}
For every $\sigma_t$ let $M^{(t)}$ be the integer coming from the definition of an $L$-reducible function and let $b_i^{(t)} \in L^{\infty}(X)$ be the corresponding family of functions. It follows from the definition of an $L$-reducible function that for every $N \le L$ and every $0 \le t \le k-1$ we may replace $\mathcal{A}_N^{\bf g}[f_1,\ldots, f_{j-1}, \sigma_t]$ by
$$ \mathbb{E}_{m \in [M^{(t)}]} \left[ \mathbb{E}_{n \in [N]} \left( \prod_{i=1}^{j-1}g_i(n) f_i \right)\left( \left( \langle g_j | 1_G \rangle_m(n) \right)b_0^{(t)} \right) \left( \prod_{i=1}^{j-1} \left( \langle g_j | g_i \rangle_m(n) \right)b_i^{(t)} \right) \right] ,$$
at the cost of an $L^{\infty}$ error of at most $\varepsilon/(16 C^{\ast})$. Therefore, we get by Minkowski's inequality that for $N,N' \le L$, $\left\|\mathcal{A}_{N,N'}^{\bf g}[f_1, \ldots, f_{j-1}, f] \right\|_{2}$ is bounded by
\begin{equation}
\label{minkowski}
\left( \sum_{t=0}^{k-1} |\lambda_t| \mathbb{E}_{m \in [M^{(t)}]} \left\|\mathcal{A}_{N,N'}^{{\bf g}_m^{\ast}}\left[f_1, \ldots, f_{j-1},b_0^{(t)},b_1^{(t)},\ldots,b_{j-1}^{(t)} \right] \right\|_{L^2(X)} \right)+\varepsilon/8.
\end{equation}

We are thus given a large family of averages coming from the lower complexity systems ${\bf g}_m^{\ast}$. Write $\gamma := \frac{\varepsilon}{16 C^{\ast}}$. Clearly, it would suffice to find a suitable interval on which each of this lower dimensional averages is bounded by $\gamma$. Although this will not be possible, we will indeed show by repeated applications of the induction hypothesis that we can get such a bound for all but a negligible subset of these averages. In order to do this, consider non-decreasing functions $F_1, \ldots, F_r:\mathbb{N} \rightarrow \mathbb{N}$, for some $r=O_{\varepsilon,d}(1)$ to be specified, defined recursively by $F_r := F$ and $F_{i-1}(N) := \max_{1 \le M \le N} F_{i}(M^{\gamma,F_{i},d-1})$, where we are using the notation in the statement of Theorem \ref{principal}. Also, let $K := K_{\gamma,d-1}$ be as in that theorem and for each tuple  $1 \le i_1, \ldots, i_s \le K$, $s \le r$, and integer $M$, we define recursively
$$ M^{(i_1, \ldots, i_s)} :=\left( \left( \left( M_{i_1}^{\gamma, F_1,d-1} \right)_{i_2}^{\gamma, F_2,d-1} \right) \ldots \right)_{i_s}^{\gamma, F_s,d-1}.$$
Thus, $M^{(i_1)}$ is the integer $M_{i_1}^{\gamma, F_1,d-1}$ obtained in (\ref{sequence0}) by starting at $M$, $M^{(i_1,i_2)}$ is the integer $M_{i_2}^{\gamma, F_2,d-1}$ obtained by starting the sequence (\ref{sequence0}) at $M=M^{(i_1)}$, etc. In particular, notice that this sequence depends only on $\varepsilon, F, d$ and $M$. Observe also that since each of the averages in (\ref{minkowski}) satisfies
\begin{equation}
\label{contribution}
 \left\|\mathcal{A}_{N,N'}^{{\bf g}_m^{\ast}}\left[f_1, \ldots, f_{j-1},b_0^{(t)},b_1^{(t)},\ldots,b_{j-1}^{(t)} \right] \right\|_{L^{\infty}(X)} \le 1,
 \end{equation}
the sum on (\ref{minkowski}) is bounded by $\sum_{t=0}^{k-1}|\lambda_t| \le C^{\ast}$.

\medskip
We now proceed as follows. By the induction hypothesis we know that each of the reduced averages in (\ref{minkowski}) is bounded by $\gamma$ for every pair $N,N' \in [M_{\ast}^{(i)},F_1(M_{\ast}^{(i)})]$ and some $1 \le i \le K$, which depends on the particular average. By the pigeonhole principle and (\ref{contribution}), this implies that we may find some $1 \le i_1 \le K$ such that the contribution to (\ref{minkowski}) of those averages which are not bounded by $\gamma$ for every pair $N,N' \in [M_{\ast}^{(i_1)},F_1(M_{\ast}^{(i_1)})]$ is at most $\left( \frac{K-1}{K} \right)C^{\ast}$. We now apply the induction hypothesis to these remaining averages with the function $F_2$, the parameter $\gamma$ and the starting point $M_{\ast}^{(i_1)}$. This way, for each of these remaining averages, we know that there exists some $1 \le i \le K$ such that the average is bounded by $\gamma$ for every pair $N,N' \in [M_{\ast}^{(i_1,i)},F_2(M_{\ast}^{(i_1,i)})]$. Since by construction of $F_1$ it is $[M_{\ast}^{(i_1,i)},F_2(M_{\ast}^{(i_1,i)})] \subseteq [M_{\ast}^{(i_1)},F_1(M_{\ast}^{(i_1)})]$, we see that those averages which we bounded in the previous step remain bounded by $\gamma$ on each of these new intervals. Thus, we may apply the pigeonhole principle as before to find some $1 \le i_2 \le K$ such that the contribution to (\ref{minkowski}) of those averages which are not bounded by $\gamma$ for every pair $N,N' \in [M_{\ast}^{(i_1,i_2)}, F_2(M_{\ast}^{(i_1,i_2)})]$ is at most $\left( \frac{K-1}{K} \right)^2 C^{\ast}$.

\medskip
Iterating the above process $r$ times, we find a tuple $1 \le i_1, \ldots, i_r \le K$ such that the set of reduced averages which are not bounded by $\gamma$ for every pair $N,N' \in [M_{\ast}^{(i_1, \ldots, i_r)}, F_r(M_{\ast}^{(i_1, \ldots, i_r)})]=[M_{\ast}^{(i_1, \ldots, i_r)}, F(M_{\ast}^{(i_1, \ldots, i_r)})]$, contributes at most 
$$\left( \frac{K-1}{K} \right)^r C^{\ast}< \varepsilon/16,$$
to (\ref{minkowski}), upon choosing $r$ sufficiently large in terms of $\varepsilon$ and $d$. Since the sum over the remaining terms will be bounded by $\sum_{t=0}^{k-1} |\lambda_t| \gamma < \varepsilon/16$, we conclude that (\ref{minkowski}), and therefore $\left\|\mathcal{A}_{N,N'}^{\bf g}[f_1, \ldots, f_{j-1}, f] \right\|_{2}$, is bounded by $\varepsilon/4$, for every $N,N' \in [M_{\ast}^{(i_1,\ldots,i_r)}, F(M_{\ast}^{(i_1, \ldots, i_r)})]$. Notice that while the specific integer $M_{\ast}^{(i_1,\ldots,i_r)}$ we have obtained depends on the set of functions $f_1, \ldots, f_{j-1}, f$ and the system ${\bf g}$, this integer belongs to the sequence $M_{\ast}^{(j_1,\ldots,j_r)}$, $1 \le j_1, \ldots, j_r \le K$, which depends only on $F,\varepsilon,d$ and $M_{\ast}$. The result follows from this observation with the sequence (\ref{sequence}) given by the integers $M_{\ast}^{(j_1, \ldots, j_r)}$, $1 \le j_1, \ldots, j_r \le K$.
\end{proof}

\subsection{Proof of Theorem 3.2}

We can now conclude the proof of Theorem \ref{principal}. As it was done before, we fix $X,G,F, \varepsilon,d$ and ${\bf g}$ as in the statement of that theorem and assume without lost of generality that each reduction ${\bf g}_m^{\ast}$ of ${\bf g}$ is of complexity at most $d-1$ and that the result is already proven for every $d'<d$. We will also write $M_0$ for the integer $M$ to be chosen as the starting point of the sequence (\ref{sequence0}) in Theorem \ref{principal}. Let $\delta, \eta$ be as specified at the beginning of \S \ref{reducible} and write $C_i := C_i^{\delta, \eta}$ for the constants defined in (\ref{central sequence}). Given some positive integer $L$ write $\Sigma_L$ for the set of $L$-reducible functions and set
$$ \Sigma_L^{+} := \Sigma_L \cup B_2(\delta/C^{\ast}),$$
where we write $B_2(\delta/C^{\ast})$ for the set of $f \in L^2(X)$ with $\left\| f \right\|_2 \le \delta/C^{\ast}$. Consider on $L^2(X)$ the norms $\left\| \cdot \right\|_L := \left\| \cdot \right\|_{\Sigma_L^{+}}$ defined as in ($\ref{estructura}$). It is easy to see that these norms are well defined and equivalent to $\left\| \cdot \right\|_{L^2(X)}$ (by the presence of the small $L^2$ ball and the fact that reducible functions are bounded by $1$). Also, notice that $\Sigma_{L+1}^{+} \subseteq \Sigma_{L}^{+}$ for every $L$ which in turn implies (by Lemma \ref{normas}) that $\left\| \cdot \right\|_{L+1}^{\ast} \le \left\| \cdot \right\|_{L}^{\ast}$.

\medskip
Given any integer $M$ write $\psi(M):= F(M^{\ast})$ where $M^{\ast}$ is the integer obtained in Proposition \ref{low dim} with $M_{\ast}=M$. Let $f_1, \ldots, f_{j} \in L^{\infty}(X)$, $\left\| f_i \right\|_{\infty} \le 1$, be given and consider for $f_{j}$ a decomposition of the form provided in Proposition \ref{decomposition} with $\left( \left\| \cdot \right\|_{L} \right)_{L \in \mathbb{N}}$, $\psi, \delta, \eta$ as above and $c$ equal to the constant $c_1$ in Lemma \ref{inverso}. This allows us to find a constant $1 \le C_i \le C^{\ast}$ and some integer $M$ with $M_0 \le M =O_{M_0, \varepsilon, F,d}(1)$, such that
\begin{equation}
\label{partes}
f_{j} = \sum_{t=0}^{k-1} \lambda_t \sigma_t + u + v,
\end{equation}
where $\sum_{t=0}^{k-1} |\lambda_t| \le C_i$, each $\sigma_t$ belongs to $\Sigma_B^+$ for some $B \ge \psi(M)$ (and therefore to $\Sigma_{\psi(M)}^+$), $\left\| u \right\|_{A}^{\ast} < \eta(C_i)$ for some $A < c_1M$ and $\left\| v \right\|_2 < \delta$. We remark that this constant $C_i$ is the one defined in (\ref{central sequence}) and that the integer $M$ obtained belongs to the sequence given in Proposition \ref{decomposition}, which does not depend on the family of norms $\left( \left\| \cdot \right\|_L \right)_{L \in \mathbb{N}}$ and in the present case is therefore independent of the particular system ${\bf g}$ we have fixed (although it certainly depends on its complexity $d$, as well as on $\varepsilon,F$ and $M_0$). Since $\left\| \sum_{t}^{\ast} \lambda_t \sigma_t \right\|_2 \le \delta$, where the sum is restricted to those $\sigma_t \in B_2(\delta/C^{\ast})$, we may assume that each $\sigma_t$ in (\ref{partes}) actually belongs to $\Sigma_{\psi(M)}$, at the cost of softening our bound on $v$ to $\left\| v \right\|_2 < 2\delta$.

\medskip
We would like to use Lemma \ref{inverso} to study the function $u$, but first we need to gain some control on its $L^{\infty}$ norm. In order to do this, denote by $S \in \mathcal{X}$ the set of points on which the inequality $|v(s)| \le C_i$ holds (in particular one has $\mu(S^c) < (2\delta/C_i)^2$) and write $v' := u {\bf 1}_{S^c} + v$. From the fact that $\left\| \sigma_j \right\|_{L^{\infty}(X)} \le 1$ for every $\sigma_j \in \Sigma_{\psi(M)}$, (\ref{partes}) and the definition of $S$, one easily checks that $ | u {\bf 1}_{S^c}(x)| \le 3 | v(x) |$ a.e. and therefore $ \left\| u {\bf 1}_{S^c} \right\|_2 \le 3 \left\| v \right\|_2$. Hence, it follows that for every pair of integers $N,N'$, 
\begin{align}
\label{1}
\nonumber \left\| \mathcal{A}_{N,N'}[f_1, \ldots, f_{j-1}, v'] \right\|_2 &\le \left\|  \mathcal{A}_{N'}[f_1, \ldots, f_{j-1}, v'] \right\|_2+\left\|  \mathcal{A}_{N}[f_1, \ldots, f_{j-1}, v'] \right\|_2 \\
\nonumber &\le 2( 4 \left\| v \right\|_2) \\
&< \varepsilon/3,
\end{align}
where we are using Minkowski's inequality and the fact that $\left\| f_i \right\|_{\infty} \le 1$ for every $1 \le i \le j-1$. Consider now $u {\bf 1}_S$. Similarly as above, one sees that $\left\| u {\bf 1}_S \right\|_{L^{\infty}(X)} \le 3C_i$. Also, it follows from Lemma \ref{normas} that for every $\sigma \in \Sigma_A$ it is
\begin{align*}
\left| \langle u {\bf_1}_S , \sigma \rangle \right| &\le \left| \langle u , \sigma \rangle \right|+\left| \langle u  {\bf_1}_{S^c} , \sigma {\bf_1}_{S^c} \rangle \right| \\
&\le \left\| u \right\|_{A}^{\ast} + \left\| u {\bf 1}_{S^c} \right\|_2 \left\| \sigma {\bf 1}_{S^c} \right\|_2 \\
&< \eta(C_i)+12 \delta^2 /C_i \\
&< 2 \eta(C_i).
\end{align*} 
We are now in a position to apply Lemma \ref{inverso}, which implies that for every pair $N, N' \ge M$
\begin{align}
\label{2}
\nonumber \left\| \mathcal{A}_{N,N'} [f_1, \ldots, f_{j-1},u{\bf 1}_S] \right\|_2 &\le \left\| \mathcal{A}_{N'}[f_1, \ldots, f_{j-1},u{\bf 1}_S] \right\|_2+\left\| \mathcal{A}_{N}[f_1, \ldots, f_{j-1},u{\bf 1}_S] \right\|_2 \\
&\le \varepsilon/3.
\end{align}
It only remains to analyze $\sum_{t=0}^{k-1} \lambda_t \sigma_t$. But we may now invoke Proposition \ref{low dim} to conclude from our choice of $\psi$ that
\begin{equation}
\label{3}
\left\| \mathcal{A}_{N,N'}[f_1, \ldots, f_{j-1}, \sum_{t=0}^{k-1} \lambda_t \sigma_t] \right\|_{L^2(X)} < \varepsilon / 3,
\end{equation}
for every pair $M_i \le N,N' \le F(M_i)$ and some $M_i \in [M,\psi(M)]$ which belongs to the corresponding sequence (\ref{sequence}). Theorem \ref{principal} then follows from (\ref{partes}), (\ref{1}), (\ref{2}), (\ref{3}) and Minkowski's inequality.

\section{The complexity of polynomial systems}
\label{nilpotent}

In this section we will prove that every system of the form given in Theorem \ref{original} has finite complexity, thereby finishing the proof of that theorem. In order to do this, we begin by reviewing some facts about polynomial sequences in nilpotent groups. For a detailed treatment of this topic, the reader is referred to the work of Leibman \cite{L0,L1}.

\medskip
For a $G$-sequence $g=\left(g(n) \right)_{n \in \mathbb{Z}}$ taking values in a nilpotent group $G$ and some integer $m$, we define the operator $D_m$ which takes $g$ to the $G$-sequence $(D_m g)(n):= g(n)g(n+m)^{-1}$. In particular, we have $\langle g|h \rangle_m (n) = (D_m g) (n) h(n+m)$, for every pair of $G$-sequences $g,h$ and every positive integer $m$. We say that a $G$-sequence is {\sl polynomial} if there exists some positive integer $d$ such that for every choice of integers $m_1, \ldots, m_d$, we have $ D_{m_1} \ldots D_{m_d} g  = 1_G$, where we recall that $1_G$ stands for the constant sequence which equals the identity of $G$. It is known that if $\left(g(n) \right)_{n \in \mathbb{Z}}$ is a sequence in a nilpotent group $G$ which is of the form
\begin{equation}
\label{characterization}
 g(n) = T_1^{p_1(n)} \ldots T_k^{p_k(n)},
 \end{equation}
for some $T_1, \ldots, T_k \in G$ and some set of integer valued polynomials $p_1, \ldots, p_k$, then $g$ is a polynomial sequence. Indeed, each $T_i^{p_i(n)}$ is clearly a polynomial sequence and the product of polynomial sequences is polynomial by Lemma \ref{Leibman} below (the converse also holds, see for example \cite{L0}).

\medskip
By a polynomial system we shall mean a system ${\bf g} = (g_1, \ldots, g_j)$, where each $g_i$, $1 \le i \le j$, is a polynomial sequence. We define the size of such a system to be $|{\bf g}|=j$. To prove Theorem \ref{original} it will suffice, by Theorem \ref{principal} and the fact that sequences of the form (\ref{characterization}) are polynomial, to prove that every polynomial system has finite complexity.

\medskip
In order to proceed, we will need to define the degree of a polynomial sequence. Unfortunately, the natural choice of taking the least positive integer $d$ for which every $d$ successive application of the above operators returns the identity is not appropriate for our purposes, since with this definition the set of polynomial sequences of degree $\le d$ need not form a group. In order to amend this, we need to introduce some notation. Write $\mathbb{N}_0 = \mathbb{N} \cup \left\{ 0 \right\}$ and $\mathbb{N}_{\ast} = \mathbb{N}_0 \cup \left\{ -\infty \right\}$. We say a vector $\overline{d} = (d_1, \ldots, d_k) \in \mathbb{N}_{\ast}^{k}$ is {\sl superadditive} if $d_i \le d_{i+1}$ for every $1 \le i <k$ and $d_i + d_j \le d_{i+j}$ for every pair $i,j$, where we are using the conventions $-\infty + t = -\infty$ for every $t \in \mathbb{N}_{\ast}$ and $-\infty < r$ for every $r \in \mathbb{N}_0$. Also, given a superadditive vector $\overline{d} = (d_1, \ldots, d_k)$ and some nonnegative integer $t$, we write $\overline{d}-t := (d_1', \ldots, d_k')$, where $d_i' = d_i-t$ if $t \le d_i$ and $d_i'=-\infty$ otherwise. Notice that $\overline{d}-t$ so defined is also a superadditive vector.

\medskip
Fix a nilpotent group $G$ of nilpotency class $s$ and let $G=G_1 \supset G_2 \supset \ldots \supset G_s \supset G_{s+1}=\left\{ 1_G \right\}$ be its lower central series. As in \cite{L0,L1}, we say a sequence $g=\left(g(n) \right)_{n \in \mathbb{Z}}$ taking values in $G$ is a polynomial sequence of (vector) degree $\le (d_1, \ldots, d_s)$ if $( D_{m_1} \ldots D_{m_{d_k+1}} g) (n) \in G_{k+1}$ for every $n$, every $1 \le k \le s$ and every choice of $m_1, \ldots, m_{d_{k}+1} \in \mathbb{Z}$. If $d_k = - \infty$ we take this to mean that $g$ itself takes values in $G_{k+1}$. We will make use of the following results of Leibman.

\begin{lema}[{\cite[\S 3]{L1}}]
\label{Leibman}
Let $\overline{d}=(d_1,\ldots,d_s)$ be a superadditive vector and let $t,t_1,t_2 \ge 0$ be nonnegative integers. Then we have the following properties.
\begin{enumerate}
	\item If $g$ is a polynomial sequence of degree $\le \overline{d}-t$, then $D_m g$ is a polynomial sequence of degree $\le \overline{d}-(t+1)$, for every $m \in \mathbb{Z}$.
	\item The set of polynomial sequences of degree $\le \overline{d}-t$ forms a group.
	\item If $g$ is a polynomial sequence of degree $\le \overline{d}-t_1$ and $h$ is a polynomial sequence of degree $\le \overline{d}-t_2$, then $[g,h]$ is a polynomial sequence of degree $\le \overline{d}-(t_1+t_2)$, where $[g,h](n):= g^{-1}(n)h^{-1}(n)g(n)h(n)$.
\end{enumerate}
\end{lema}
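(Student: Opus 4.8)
\textbf{Proof plan for Lemma \ref{Leibman}.}

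The plan is to deduce all three statements from the basic calculus of the difference operators $D_m$ on a nilpotent group, following Leibman's treatment in \cite{L1}. The starting point is the commutator identity
$$ D_m(gh)(n) = g(n)g(n+m)^{-1} \cdot g(n+m) h(n) h(n+m)^{-1} g(n+m)^{-1} = (D_m g)(n) \cdot {}^{g(n+m)}\!\big((D_m h)(n)\big),$$
so that $D_m(gh)$ differs from $(D_m g)(D_m h)$ by conjugation, and more importantly by commutators that live one step deeper in the lower central series. The key structural observation is that if $g$ is polynomial of degree $\le \overline d$ and $h$ is polynomial of degree $\le \overline e$, then the commutator $[g,h]$ and the ``twisted'' terms obtained from the identity above take values in a deeper group $G_{k+1}$ as soon as the sum of the relevant degree-coordinates drops below the threshold; this is exactly where superadditivity of the degree vector is used, since it guarantees $d_i + e_j \le (d+e)_{i+j}$ and hence that the naive degree bounds for products and commutators are consistent with the filtration.

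First I would prove (1): if $D_{m_1}\cdots D_{m_{d_k+1}}g$ takes values in $G_{k+1}$ for all choices of $m_i$, then applying one further $D_m$ and using that $D_m$ commutes with the other $D_{m_i}$ (they are all of the form $n \mapsto$ shift), we get that $D_{m_1}\cdots D_{m_{d_k}} (D_m g)$ takes values in $G_{k+1}$, which says precisely that $D_m g$ has degree $\le \overline d - (t+1)$ in the coordinate $k$; doing this for every $k$ gives the claim, with the convention for $-\infty$ handled by noting that $D_m$ of a sequence valued in $G_{k+1}$ is again valued in $G_{k+1}$. Then I would prove (2) and (3) together by a simultaneous induction on the nilpotency class $s$ (equivalently, on the depth in the lower central series): the base case $s=1$ is the abelian case where $D_m$ is additive and everything is immediate. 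For the inductive step, given $g$ of degree $\le \overline d - t_1$ and $h$ of degree $\le \overline d - t_2$, one expands $D_{m_1}\cdots D_{m_r}(gh^{-1})$ (for (2)) or $D_{m_1}\cdots D_{m_r}([g,h])$ (for (3)) using the product-and-commutator identity above; each application of a $D_m$ produces a ``main term'' plus correction terms that are commutators, and every such commutator involves factors that, modulo $G_{k+1}$, are polynomial sequences in the quotient $G/G_{k+1}$ of strictly smaller nilpotency class, to which the induction hypothesis applies; superadditivity ensures the bookkeeping on the degree coordinates works out so that after $(\overline d - (t_1{+}t_2))_k + 1$ steps one lands in $G_{k+1}$.

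The main obstacle is the combinatorial bookkeeping in the inductive step: one must track how each difference operator distributes over products and commutators, producing a controlled number of correction terms, and verify that the degree vector of each correction term is dominated by $\overline d - (t_1+t_2)$ in every coordinate. This is where superadditivity is essential and where one has to be careful with the $-\infty$ conventions (a coordinate becoming $-\infty$ means the sequence has already descended into the next group of the filtration, so no further differencing is needed in that coordinate). Since the excerpt attributes the statement to \cite[\S 3]{L1}, I would organize the write-up as a reduction to the product formula for $D_m$ together with this induction, citing Leibman for the detailed verification of the degree estimates rather than reproducing every case, and emphasize only the two inputs that matter downstream: that degree-$\le\overline d$ polynomial sequences form a group (so the $m$-reductions in \S\ref{main} stay polynomial) and that commutators genuinely decrease the degree (so the complexity induction in \S\ref{nilpotent} can terminate).
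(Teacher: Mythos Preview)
The paper does not prove this lemma at all: it is stated with a citation to Leibman \cite[\S 3]{L1}, and the only argument supplied is the brief remark immediately following the statement, which explains how to pass between Leibman's operators $\widetilde{D}_m g = g(n)^{-1}g(n+m)$ and the operators $D_m g = g(n)g(n+m)^{-1}$ used here (via a descending induction on $t$). Your proposal therefore goes well beyond what the paper does, and in broad outline it is the right sketch of Leibman's own argument: the product formula for $D_m(gh)$, induction on the nilpotency depth, and superadditivity for the degree bookkeeping.

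One correction to your sketch of (1): the operators $D_m$ do \emph{not} commute with one another in a nonabelian group. A quick computation gives
\[
D_{m_1}D_{m_2}g(n)=g(n)g(n+m_2)^{-1}g(n+m_1+m_2)g(n+m_1)^{-1},
\]
which is not symmetric in $m_1,m_2$. Fortunately no commutativity is needed: by definition, $D_{m_1}\cdots D_{m_r}(D_m g)$ is simply $r+1$ successive applications of difference operators to $g$, so if $g$ has degree $\le \overline{d}-t$ then applying $(d_k-t-1)+1$ operators to $D_m g$ is the same as applying $(d_k-t)+1$ operators to $g$, which lands in $G_{k+1}$ by hypothesis. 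Thus (1) is immediate from the definition, and your argument for it should be simplified accordingly. Since the paper only cites the result and addresses the $D_m$ versus $\widetilde{D}_m$ discrepancy, a faithful write-up would do the same rather than reproduce Leibman's proof.
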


{\noindent \sl Remark.} The results of \cite{L1} concern the operators $(\widetilde{D}_m g) (n):= g(n)^{-1}g(n+m)=(D_m g^{-1})(n)$. Nevertheless, using Lemma \ref{Leibman} for these operators and a straightforward descending induction on $t$ one can easily check that a $G$-sequence $g$ has degree $\le \overline{d}-t$ with respect to the operators $\widetilde{D}_m$ if and only if it has degree $\le \overline{d}-t$ with respect to the operators $D_m$, from where we recover Lemma \ref{Leibman} as stated.

\medskip
We say a polynomial system ${\bf g}=(g_1, \ldots, g_j)$ has degree $\le d$ if the degree of $g_i$ is $\le d$, for every $1 \le i \le j$. We will show that any system of degree $\le \overline{d}$, for some superadditive vector $\overline{d}=(d_1,\ldots,d_s)$, has finite complexity. Notice that this is enough to prove Theorem \ref{original}, since if a polynomial sequence $g$ has degree $\le (d_1, \ldots, d_s)$, then it also has degree $\le (d,2d,\ldots,sd)$, with $d=\max \left\{ d_i : 1 \le i \le s \right\}$, and this last vector is clearly superadditive.

\medskip
Given a polynomial system ${\bf g}$, we are concerned with the process that consists of passing from ${\bf g}$ to an equivalent system ${\bf g}'$, then taking the $m$-reduction $({\bf g}')_m^{\ast}$ of ${\bf g}'$ for some $m$, passing to an equivalent system $(({\bf g}')_m^{\ast})'$ and then taking the $m'$-reduction of this for some $m'$, etc. What we are free to choose in the above process is to which equivalent system we apply the reductions (but not the integer on which we subsequently reduce) and our objective is to show that there exists some constant $C$, depending on ${\bf g}$, such that for every sequence of positive integers $m,m',m'',\ldots$ we can go to the trivial system $(1_G)$ by means of at most $C$ repetitions of the above transformations. This clearly implies that the complexity of ${\bf g}$ is at most $C$.

\medskip
In order to simplify notation we will omit the reference to the specific sequence of integers on which we reduce. So for instance, we will generically refer to the reduction of a system ${\bf g}=(g_1,\ldots,g_j)$ to be the system
$${\bf g}^{\ast}=(g_1,\ldots,g_{j-1},\langle g_j | 1_G \rangle, \langle g_j | g_1 \rangle, \ldots, \langle g_j | g_{j-1} \rangle ).$$
Similarly, we have the identity 
$$\langle g | h \rangle (n) = Dg (n) (Dh(n))^{-1}h(n),$$
provided, of course, that the omitted subindices are the same. We define a {\sl step} to be the process of passing from a system ${\bf g}$ to the reduction $({\bf g}')^{\ast}$ of some system ${\bf g}'$ equivalent to ${\bf g}$. We will show that one can pass from a polynomial system ${\bf g}$ to the trivial system in a number of steps which is bounded in terms of the size and degree of ${\bf g}$.

\medskip
We define the complete reduction of a system ${\bf g}$ to be the system
$$ {\bf g}^{\ast \ast} = (g_1, \ldots, g_{j-1}, \langle g_j | g_1 \rangle, \ldots, \langle g_j | g_{j-1} \rangle ).$$
Thus, ${\bf g}^{\ast \ast} = {\bf g}^{\ast} \setminus \left\{ \langle g_j | 1_G \rangle \right\}$. We define a {\sl complete step} in the same way as a step, but with the reduction replaced by the complete reduction. Complete steps are needed for a technical reason related to the inductive process to be applied. Precisely, in order to handle steps involving systems of degree $\le \overline{d}$ we will need to assume some control on both steps and complete steps over systems of degree $\le \overline{d}-1$. 

\medskip
Theorem \ref{original} follows from Theorem \ref{principal} and the following result.

\begin{teo}
\label{final}
Let ${\bf g}$ be a polynomial system of size $|{\bf g}| \le C_1$ and degree $\le \overline{d}$, for some superadditive vector $\overline{d}=(d_1,\ldots,d_s)$. Then,
\begin{itemize}
	\item one can go from ${\bf g}$ to the trivial system $(1_G)$ in $O_{C_1,\overline{d}}(1)$ steps,
	\item one can go from ${\bf g}$ to a system consisting of a single sequence of degree $\le \overline{d}$ in $O_{C_1,\overline{d}}(1)$ complete steps,
\end{itemize}
for every sequence of positive integers $m,m',m'',\ldots$. In particular, ${\bf g}$ has complexity $O_{C_1,\overline{d}}(1)$.
\end{teo}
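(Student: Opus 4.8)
The plan is to prove Theorem \ref{final} by a double induction: an outer induction on the superadditive vector $\overline{d}$ (ordered so that $\overline{d}-1$ precedes $\overline{d}$, with the base case $\overline{d}=(-\infty,\ldots,-\infty)$, i.e. the system consists only of constant sequences, being essentially trivial since such a system is equivalent to $(1_G)$ after at most $C_1$ obvious reductions), and an inner induction on the size $|{\bf g}|=j$. The two bullets must be proven together, because, as the paper flags, handling an ordinary step on a degree-$\le\overline{d}$ system of size $j$ will require already knowing that both steps and complete steps terminate quickly on systems of degree $\le\overline{d}-1$ (these show up among the new sequences $\langle g_j|g_i\rangle$ after reduction, once we analyze their degree).

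The key computation to set up first is the degree bookkeeping for a single reduction. Given ${\bf g}=(g_1,\dots,g_j)$ of degree $\le\overline{d}$, one writes $\langle g_j|g_i\rangle(n) = D_m g_j(n)\,(D_m g_i(n))^{-1} g_i(n)$. By Lemma \ref{Leibman}(1), $D_m g_j$ and $D_m g_i$ have degree $\le\overline{d}-1$; by part (2) the degree-$\le\overline{d}-1$ sequences form a group, and $g_i$ itself has degree $\le\overline{d}$, so $\langle g_j|g_i\rangle$ has degree $\le\overline{d}$ but its ``leading part'' is governed by $g_i$: more precisely $\langle g_j|g_i\rangle = (\text{degree }\le\overline{d}-1)\cdot g_i$. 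The crucial observation is that in the complete reduction ${\bf g}^{\ast\ast}=(g_1,\dots,g_{j-1},\langle g_j|g_1\rangle,\dots,\langle g_j|g_{j-1}\rangle)$ each $\langle g_j|g_i\rangle$ differs from $g_i$ — which is already present — by a factor of degree $\le\overline{d}-1$. So after passing to the equivalent system (using $T(f_1)T(f_2)=T(f_1f_2)$ and the freedom to reorder) one can hope to ``divide out'' the $g_i$'s and be left, modulo lower-degree sequences, with a system whose size has not grown, and where the genuinely new content lies entirely in sequences of degree $\le\overline{d}-1$, to which the outer inductive hypothesis applies. The extra sequence $\langle g_j|1_G\rangle(n) = D_mg_j(n)\,(D_mg_j(n)\cdots)$, which is just $D_m g_j$ up to the relevant conventions, has degree $\le\overline{d}-1$ outright, which is why it is separated out and why ordinary steps (as opposed to complete steps) are the ones that actually decrease complexity.

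Concretely, the induction on size $j$ runs as follows. Apply one (complete) step to ${\bf g}$; pass to a convenient equivalent system ${\bf g}'$; after reduction, group the $2j-1$ (resp. $2j-2$) resulting sequences into the ``old'' block $g_1,\dots,g_{j-1}$, a ``perturbed'' block of $\langle g_j|g_i\rangle$ differing from the old ones by degree-$\le\overline{d}-1$ factors, and possibly $\langle g_j|1_G\rangle$ of degree $\le\overline{d}-1$. Using the outer induction hypothesis (second bullet) I first reduce each low-degree sequence that appears to a single sequence of degree $\le\overline{d}-1$ in boundedly many complete steps, while carrying the high-degree sequences along unchanged — here one needs that a step/complete step applied to a subsystem extends to a step on the whole system, i.e. reductions interact well with the presence of extra coordinates (this follows because $\langle\cdot|\cdot\rangle$ only involves the last coordinate and an earlier one). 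After enough such operations the system has degree genuinely dropping in the sense that it is equivalent to one with strictly smaller size at degree $\le\overline{d}$ plus controlled baggage at degree $\le\overline{d}-1$; then invoke the inner (size) hypothesis, and finally the outer (degree) hypothesis to clean up what remains. Bounding the number of steps is then just composing finitely many $O_{*}(1)$ bounds, since both $j$ and $\overline{d}$ range over finite sets determined by $C_1$ and $\overline{d}$.

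The main obstacle I expect is the combinatorial bookkeeping in the ``divide out the $g_i$'s'' step: after taking a complete reduction the new system is not literally $(g_1,\dots,g_{j-1},\text{low-degree stuff})$ but $(g_1,\dots,g_{j-1},\,\varepsilon_1 g_1,\dots,\varepsilon_{j-1}g_{j-1})$ with $\varepsilon_i$ of degree $\le\overline{d}-1$, and one is \emph{not} free to multiply two coordinates of a system together — only to act on functions. So the reduction of this perturbed system must itself be computed, and one has to check that iterating the reduction operation on $(g_1,\dots,g_{j-1},\varepsilon_1 g_1,\dots,\varepsilon_{j-1}g_{j-1})$ peels off one $g_i$ at a time, each step introducing only new sequences of degree $\le\overline{d}-1$ (because the relevant differences $D_m(\varepsilon_i g_i)(D_m g_i)^{-1}$ land in degree $\le\overline{d}-1$ by Lemma \ref{Leibman}), and terminates. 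Getting the quantitative claim — that all of this happens in a number of steps depending only on $C_1$ and $\overline{d}$ and not on the integers $m,m',\dots$ — requires being careful that every bound invoked is of the uniform $O_{C_1,\overline{d}}(1)$ type, which is exactly why the statement was phrased with that uniformity and why the inductive hypotheses must be the \emph{uniform} versions.
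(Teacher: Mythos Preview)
Your degree bookkeeping is correct, and you have rightly identified that $\langle g_j | g_i \rangle$ differs from $g_i$ by a degree-$\le\overline{d}-1$ factor. However, there is a genuine gap in the inductive structure.

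The claim that ``a step/complete step applied to a subsystem extends to a step on the whole system'' is false in the sense you need. When you reduce ${\bf g}=(g_1,\ldots,g_j)$, the result ${\bf g}^\ast$ contains $\langle g_j|g_i\rangle$ for \emph{every} $i<j$; you cannot ``carry the high-degree sequences along unchanged'' while reducing only the low-degree ones, because each reduction adds a sequence $\langle g_j|a\rangle$ for each high-degree $a$ present, and these have degree $\le\overline{d}$, not $\le\overline{d}-1$. Relatedly, the inner induction on the size $|{\bf g}|=j$ cannot work as stated, since size roughly doubles at each step and the number of sequences that are \emph{not} of degree $\le\overline{d}-1$ need not drop either.

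The paper's proof organizes the system into cosets: write ${\bf g}={\bf h}_0\cup\bigcup_{i=1}^l s_i{\bf h}_i$ with the $s_i$ of degree $\le\overline{d}$ and every ${\bf h}_i$ of degree $\le\overline{d}-1$. The parameter that genuinely decreases is $l$, the number of cosets. The identity you are missing is
\[
\langle s_l h \,|\, s_l h' \rangle \;=\; s_l\,\langle h | h' \rangle,
\]
valid for arbitrary $h,h'$. Thus if the last coordinate is placed in the $s_l$-coset, the portion of the reduction coming from the other $s_l$-coset elements stays in that coset, and the effect on ${\bf h}_l$ is exactly a \emph{complete} reduction ${\bf h}_l^{\ast\ast}$. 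Combined with your observation that $\langle s_l h | s_i h' \rangle \in s_i\cdot(\text{degree}\le\overline{d}-1)$ for $i\ne l$ (which needs a commutator argument via Lemma~\ref{Leibman}(3) to push $s_i$ to the left), one sees that a single step on ${\bf g}$ returns a system of the same shape with ${\bf h}_l$ replaced by ${\bf h}_l^{\ast\ast}$ and the remaining ${\bf h}_i$ merely enlarged by a bounded factor. Now the inductive hypothesis on \emph{complete} steps at degree $\le\overline{d}-1$ shrinks ${\bf h}_l$ to a single sequence in $O_{C_1,\overline{d}}(1)$ steps; one further step then eliminates the $s_l$-coset entirely, reducing $l$ by one. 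Iterating in $l$ is what makes the argument terminate.
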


\begin{proof}
Let $\overline{d}$ be as in the statement. We begin by noticing that the result is trivially true for systems of degree $\le \overline{d}-(d_s+1)=(-\infty, \ldots, -\infty)$, since $1_G$ is the only sequence lying in $G_{k+1}$ for every $1 \le k \le s$. We will proceed by induction. Since $\overline{d}-t$ is superadditive for every $t \ge 0$, it will suffice to prove that if Theorem \ref{final} holds for systems of degree $\le \overline{d}-1$ then it also holds for systems of degree $\le \overline{d}$.

\medskip
Thus, let ${\bf g}$ be as in the statement. We will first prove that we can go from ${\bf g}$ to the trivial system in $O_{C_1,\overline{d}}(1)$ steps (and therefore, that ${\bf g}$ has complexity $O_{C_1,\overline{d}}(1)$). In order to do this, observe that ${\bf g}$ can be rewritten in the form
\begin{equation}
\label{sistema}
 {\bf g} = {\bf h}_0 \cup \bigcup_{i=1}^l s_i {\bf h}_i,
 \end{equation}
for some polynomial sequences $s_1, \ldots, s_l$ of degree $\le \overline{d}$, $l \le C_1$, and some polynomial systems ${\bf h}_i$ of degree $\le \overline{d}-1$ and size $\le C_1$, with ${\bf h}_0$ possibly empty (for example, one could simply take $s_i=g_i$ and $h_i=(1_G)$ for every $1 \le i \le l$). Here, if ${\bf h}=(h_1,\ldots,h_k)$, $s{\bf h}$ is the system $(sh_1, \ldots, sh_k)$ and the union of two systems $(h_1, \ldots, h_k)$, $(h_1',\ldots, h_r')$ is understood to be the system $(h_1, \ldots, h_k, h_1', \ldots, h_r')$.

\medskip
The idea will be to show that for systems of the form (\ref{sistema}) one can perform steps in such a way that the resulting systems are also of the form (\ref{sistema}) for the same set of sequences $s_1,\ldots,s_l$. Furthermore, we will show that in finitely many steps we may actually discard the sequence $s_l$, therefore arriving at a system like (\ref{sistema}) in which only the sequences $s_1,\ldots, s_{l-1}$ are present. Iterating this $l$ times we shall then end up with a system of degree $\le \overline{d}-1$, from where one can proceed by induction.

\medskip
In order to carry out this plan we begin by observing that if $s_i,s_j$ are sequences of degree $\le {\overline{d}}$ and $h_i,h_j$ are sequences of degree $\le \overline{d}-1$, we have
\begin{align}
\label{big formula}
\nonumber \langle s_jh_j | s_i h_i \rangle &= D(s_jh_j) (D (s_i h_i))^{-1} s_ih_i \\
\nonumber &= s_i D(s_jh_j) (D (s_i h_i))^{-1} \left[D(s_jh_j) (D (s_i h_i))^{-1},  s_i \right] h_i \\
 &= s_i h^{j,i},
 \end{align}
for some polynomial sequence $h^{j,i}$ which is seen to have degree $\le \overline{d}-1$ by Lemma \ref{Leibman}. Furthermore, if $s_i=s_j=s$, it is easy to check that 
\begin{equation*}
\langle sh_j | sh_i \rangle = s \langle h_j|h_i \rangle.
\end{equation*}
It follows from these formulas that, provided $|{\bf h}_l|>1$, the reduction ${\bf g}^{\ast}$ of ${\bf g}$ is equivalent to a system of the form
\begin{equation}
\label{big reduction}
 {\bf h}_0^{(1)} \cup \left( \bigcup_{i=1}^{l-1} s_i {\bf h}_i^{(1)} \right) \cup s_l {\bf h}_l^{\ast \ast},
 \end{equation}
for some systems ${\bf h}_0^{(1)}, {\bf h}_1^{(1)},\ldots,{\bf h}_{l-1}^{(1)}$ of degree $\le \overline{d}-1$ and size $|{\bf h}_0^{(1)}| \le 2|{\bf h}_0|+1$ and $|{\bf h}_i^{(1)}| \le 2|{\bf h}_i|$ for every other $i$, and where we recall that ${\bf h}_l^{\ast \ast}$ refers to the complete reduction of ${\bf h}_l$. Explicitly, if ${\bf h}_i = (h_{i,1},\ldots,h_{i,j_i})$ for every $0 \le i \le l$, then
$${\bf h}_0^{(1)} = \left(\langle s_l h_{l,j_l} | 1_G \rangle, h_{0,1},\ldots,h_{0,j_0},\langle s_l h_{l,j_l} | h_{0,1} \rangle, \ldots, \langle s_l h_{l,j_l} | h_{0,j_0} \rangle \right),$$
while $s_i{\bf h}_i^{(1)}$ equals
$$\left( s_ih_{i,1},\ldots,s_ih_{i,j_i},\langle s_l h_{l,j_l} | s_ih_{i,1} \rangle, \ldots, \langle s_l h_{l,j_l} | s_ih_{i,j_i} \rangle \right),$$
for every $1 \le i \le l-1$. We see by (\ref{big formula}) that this is of the desired form.

\medskip
Observe now that if ${\bf h}$ is equivalent to ${\bf h}'$ then the system $s{\bf h}$ is also equivalent to $s{\bf h}'$. Since by the induction hypothesis we know that one can pass from ${\bf h}_l$ to a system ${\bf h}$ consisting of a single sequence of degree $\le \overline{d}-1$ in $O_{C_1,\overline{d}-1}(1)$ complete steps, it follows from the above observation and (\ref{big reduction}) that we may pass from ${\bf g}$ to a system of the form
\begin{equation}
\label{casi}
 {\bf h}_0^{(2)} \cup \left( \bigcup_{i=1}^{l-1} s_i {\bf h}_i^{(2)} \right) \cup s_l {\bf h},
 \end{equation}
in $O_{C_1,\overline{d}-1}(1)$ steps, where each system ${\bf h}_i^{(2)}$ has degree $\le \overline{d}-1$ and size $O_{C_1,\overline{d}}(1)$, and ${\bf h}$ is a system consisting of a single sequence of degree $\le \overline{d}-1$. But then we see from (\ref{big formula}) that the reduction of (\ref{casi}) will be of the form
$$  {\bf h}_0^{(3)} \cup \left( \bigcup_{i=1}^{l-1} s_i {\bf h}_i^{(3)} \right),$$
with each ${\bf h}_i^{(3)}$ having degree $\le \overline{d}-1$ and size $O_{C_1,\overline{d}}(1)$. We have therefore succeeded in discarding the sequence $s_l$ from our system. We can now repeat the same process as before with $s_{l-1}$ in place of $s_l$. Since the size of ${\bf h}_{l-1}^{(3)}$ is $O_{C_1,\overline{d}}(1)$, we see that this new process finishes in $O_{C_1,\overline{d}}(1)$ steps, leaving us with a system of the form
$$  {\bf h}_0^{(4)} \cup \left( \bigcup_{i=1}^{l-2} s_i {\bf h}_i^{(4)} \right).$$
Therefore, iterating the above process $l$ times, we are finally left in $O_{C_1,\overline{d}}(1)$ steps with a system of degree $\le \overline{d}-1$ from where we may apply the induction hypothesis to obtain the trivial system in $O_{C_1,\overline{d}}(1)$ further steps, thereby completing the proof of the finite complexity of ${\bf g}$.

\medskip
Now it only remains to show that one can pass from ${\bf g}$ to a system consisting of a single sequence of degree $\le \overline{d}$ in $O_{C_1,\overline{d}}(1)$ complete steps. But it is clear that the above reasoning to pass from ${\bf g}$ to a system of degree $\le \overline{d}-1$ works in exactly the same way for complete steps, since the only things that may change are the systems ${\bf h}_0^{(1)},{\bf h}_0^{(2)}, {\bf h}_0^{(3)}, \ldots$, which nevertheless will always have degree $\le \overline{d}-1$ and whose size may only be smaller than in the previous case. Thus, the above reasoning allows us to pass to a system of degree $\le \overline{d}-1$ from where we may apply induction, as long as we are not left after any of the complete steps with a system which can be written in its entirety as $s_i{\bf h}$, for some $s_i$ as above and ${\bf h}$ of degree $\le \overline{d}-1$ and size $|{\bf h}|=1$ (because if the whole system has size $1$ the complete reduction is not defined). But since in such a case we are already done, this completes the proof of Theorem \ref{final} and therefore of Theorem \ref{original}.
\end{proof}

\section{Further results}
\label{further}

The next result is easily seen to follow from the methods of this paper.

\begin{teo}
\label{general}
Let $G$ be a nilpotent group of measure preserving transformations of a probability space $(X, \mathcal{X}, \mu)$. Then, for every $T_1, \ldots, T_l \in G$, every $f_1,\ldots, f_r \in L^{\infty}(X)$, every set of polynomials $p_{i,j}:\mathbb{Z}^d \rightarrow \mathbb{Z}$, and every F{\o}lner sequence $\left\{ \Phi_N \right\}_{N=1}^{\infty}$ in $\mathbb{Z}^d$, the averages
\begin{equation}
\label{general ergodic averages}
\frac{1}{|\Phi_N|} \sum_{u \in \Phi_N} \prod_{j=1}^r  \left( T_1^{p_{1,j}(u)} \ldots T_l^{p_{l,j}(u)} \right) f_j
\end{equation}
converge in $L^2(X, \mathcal{X}, \mu)$.
\end{teo}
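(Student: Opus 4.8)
The plan is to re-run the entire argument of Sections \ref{lemmata}--\ref{nilpotent} with $\mathbb{Z}$ replaced by $\mathbb{Z}^d$ and the averages over $[N]$ replaced by averages over the given F{\o}lner sequence $\{\Phi_N\}$. A $G$-sequence now means a map $g:\mathbb{Z}^d\to G$, a system an ordered tuple of such maps, the $m$-reduction ${\bf g}_m^{\ast}$ (for $m\in\mathbb{Z}^d$) is defined exactly as in Section \ref{main} via $\langle g|h\rangle_m(n):=g(n)g(n+m)^{-1}h(n+m)$, the notion of complexity is unchanged, and the averages (\ref{system averages}) become $\frac{1}{|\Phi_N|}\sum_{n\in\Phi_N}\prod_i g_i(n)f_i$. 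Since Theorem \ref{general} only asks for convergence along each prescribed F{\o}lner sequence (not for a common limit), I would fix $\{\Phi_N\}$ once and for all at the outset and let every implied constant in the quantitative statements depend on it; the deduction of convergence from the finitary statement is then exactly the one recorded after Theorem \ref{principal}.

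The combinatorial core of the proof, Section \ref{nilpotent}, requires nothing new: it never refers to the index set. The operators $D_m$, the notion of a polynomial sequence and of its vector degree, and Leibman's Lemma \ref{Leibman} are all available for polynomial maps $\mathbb{Z}^d\to G$ by the work of Leibman \cite{L0,L1}, since $\mathbb{Z}^d$ is a finitely generated abelian group, and the manipulations in (\ref{big formula}) and the definitions of step and complete step are purely group-theoretic. A map $n\mapsto T_1^{p_{1,j}(n)}\cdots T_l^{p_{l,j}(n)}$ with each $p_{i,j}:\mathbb{Z}^d\to\mathbb{Z}$ of degree at most $D$ is a polynomial sequence of vector degree $\le(D,2D,\ldots,sD)$, which is superadditive, so Theorem \ref{final} applies verbatim and shows every such polynomial $\mathbb{Z}^d$-system has complexity $O_{C_1,\overline{d}}(1)$.

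For the analytic part, Section \ref{lemmata} is purely Hilbert-space material and carries over with no change. In Section \ref{main} the only place the one-dimensional structure and the specific sets $[N]$ are used is the shift argument in the proof of Lemma \ref{inverso}: there one compares the average over $\Phi_N$ with the average over $\Phi_N+l$, and the resulting $L^{\infty}$-error is $O\big(C^{\ast}\,|\Phi_N\triangle(\Phi_N+l)|/|\Phi_N|\big)$. Since $\{\Phi_N\}$ is F{\o}lner, for each fixed $L$ this quantity tends to $0$ uniformly over $\{l\in\mathbb{Z}^d:\|l\|_{\infty}\le L\}$ as $N\to\infty$; hence, with $\{\Phi_N\}$ fixed, one obtains a non-decreasing $\rho:\mathbb{N}\to\mathbb{N}$ with $\rho(N)\to\infty$ such that the conclusion of Lemma \ref{inverso} holds for every box of side length at most $\rho(N)$, and this $\rho$ plays the role of the map $N\mapsto c_1N$. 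One adapts Definition \ref{symmetric} accordingly, quantifying over $l$ in the box $\{\|l\|_{\infty}\le L\}$. With this single modification, Proposition \ref{low dim} and the proof of Theorem \ref{principal} go through word for word; the only difference is that the scale bounds $M^{\varepsilon,F,d}$ produced there now also depend on $\{\Phi_N\}$ through $\rho$, while the counting constant $K_{\varepsilon,d}$ still depends only on $\varepsilon$ and the complexity. The induction on complexity closes as before, and, combined with the previous paragraph, this yields the $\mathbb{Z}^d$/F{\o}lner analogue of Theorem \ref{principal}, from which Theorem \ref{general} follows exactly as Theorem \ref{original} followed from Theorem \ref{principal}.

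The one genuinely new difficulty is the loss of a uniform rate in that shift estimate: for an arbitrary F{\o}lner sequence $|\Phi_N\triangle(\Phi_N+l)|/|\Phi_N|$ may decay arbitrarily slowly in $N$, so there is no longer a clean linear relation between the admissible $L$ and the scale $N$. The remedy is precisely to fix $\{\Phi_N\}$ before running the induction and to track its F{\o}lner modulus $\rho$ throughout; since none of the later machinery is sensitive to how the relevant scales were produced, letting the constants depend on $\{\Phi_N\}$ costs nothing. Alternatively, one could first establish the statement for the cubes $\Phi_N=[N]^d$, where $|\Phi_N\triangle(\Phi_N+l)|/|\Phi_N|\le d\|l\|_{\infty}/N$ restores the linear relation, and then pass to a general F{\o}lner sequence by tiling $\Phi_N$ from within, up to $o(|\Phi_N|)$ error, by translates of a large cube and invoking the uniformity of the cube statement over all polynomial systems of bounded size and degree, using that translating a polynomial system preserves its degree and hence its complexity bound.
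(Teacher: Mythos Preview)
Your proposal is correct and is exactly the elaboration the paper has in mind: the paper does not give a proof of Theorem \ref{general} at all, merely asserting that it ``is easily seen to follow from the methods of this paper,'' and you have accurately identified that the only point requiring care is the shift estimate in Lemma \ref{inverso}, where the bound $O(l/N)$ for $[N]$ must be replaced by the F{\o}lner ratio $|\Phi_N\triangle(\Phi_N+l)|/|\Phi_N|$, with the resulting loss of an explicit rate absorbed by fixing $\{\Phi_N\}$ at the outset and letting the scale constants depend on it. Your observation that Sections \ref{lemmata} and \ref{nilpotent} carry over verbatim (the latter via Leibman's multiparameter results) and that $K_{\varepsilon,d}$ remains independent of the F{\o}lner sequence is correct, and either of your two routes---tracking the F{\o}lner modulus directly, or first proving the cube case and then tiling---is a valid way to complete the argument.
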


During the proof of Theorem \ref{original} we used crucially the fact that the $L^{\infty}$ norm is an algebra norm ($\left\| fg \right\|_{\infty} \le \left\| f \right\|_{\infty} \left\| g \right\|_{\infty}$). While this is clearly not true for the $L^2$ norm, if we are concerned with the study of a single function $f \in L^2(X)$, this issue is no longer present. Furthermore, in this case our polynomial systems will always have size $1$, a fact that allows us to drop the hypothesis of nilpotency on our group $G$ (because we no longer need the product of polynomial sequences to be polynomial). More generally, it is easy to see from these observations that our methods produce the following result, which was also conjectured by Bergelson and Leibman in \cite{BL}.

\begin{teo}
Let $G$ be a group of unitary operators on a Hilbert space $\mathcal{H}$. If $(g(n))_{n \in \mathbb{Z}}$ is a polynomial sequence in $G$, then
$$ \lim_{N \rightarrow \infty} \frac{1}{N} \sum_{n=1}^N g(n) u$$
exists for every $u \in \mathcal{H}$.
\end{teo}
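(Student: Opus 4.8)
The proof of Theorem \ref{original} rests on two pillars: Theorem \ref{principal}, the uniform $L^2$-oscillation bounds for systems of finite complexity proved in \S\ref{lemmata}--\S\ref{main}, and Theorem \ref{final}, that polynomial systems have finite complexity, proved in \S\ref{nilpotent}. The plan is to check that both go through, in a suitably adapted form, when $L^2(X)$ is replaced by an arbitrary Hilbert space $\mathcal{H}$ and one restricts to \emph{size-$1$} systems, that is, systems $(g)$ consisting of a single $G$-sequence --- which is all that is needed, since after normalising we must show that $\mathcal{A}_N^{(g)}[w]=\tfrac1N\sum_{n=1}^N g(n)w$ converges for every $w\in\mathcal{H}$ with $\left\|w\right\|\le1$, and the $G$-sequence $(g(n))_{n\in\mathbb{Z}}$ of the statement is exactly such a system. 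Throughout, the single norm $\left\|\cdot\right\|$ of $\mathcal{H}$ plays the role of \emph{both} $\left\|\cdot\right\|_{L^2(X)}$ and $\left\|\cdot\right\|_{L^\infty(X)}$, and the measure-preserving hypothesis is replaced by the identity $\left\|g(n)w\right\|=\left\|w\right\|$, valid since the $g(n)$ are unitary.

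Finite complexity of size-$1$ polynomial systems is immediate and needs nothing from \S\ref{nilpotent}. If $g$ is polynomial, say $D_{m_1}\cdots D_{m_d}g=1_G$ for all integers $m_1,\dots,m_d$, then the $m$-reduction of $(g)$ is by definition $(\langle g\,|\,1_G\rangle_m)=(D_m g)$, again a size-$1$ system, and $D_m g$ satisfies $D_{m_1}\cdots D_{m_{d-1}}(D_m g)=1_G$ for all $m_i$; an induction on $d$ (base case $g\equiv1_G$, of complexity $0$) gives that $(g)$ has complexity $\le d$. Since this process never leaves the class of size-$1$ systems, one never needs a product of polynomial sequences to be polynomial, so the nilpotency of $G$ is never used and may be dropped.

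It remains to check the proof of Theorem \ref{principal} for a size-$1$ system $(g)$ of complexity $\le d$ over $\mathcal{H}$. All of \S\ref{lemmata} is pure Hilbert-space theory and is unaffected. In Definition \ref{symmetric}, Lemma \ref{inverso} and Proposition \ref{low dim} the product $\prod_{i=1}^{j-1}(\cdots)$ is \emph{empty} when $j=1$, so the submultiplicativity of $\left\|\cdot\right\|_\infty$ --- the only algebraic property of $L^\infty$ that was exploited --- is never invoked: an $L$-reducible $\sigma$ is now just a vector with $\left\|\sigma\right\|\le1$ for which, for some $M$ and some $b_0$ with $\left\|b_0\right\|\le1$, one has $\left\|g(l)\sigma-\mathbb{E}_{m\in[M]}(D_m g)(l)b_0\right\|<\varepsilon/(16C^{\ast})$ for all $l\le L$, and the averages over $m$-reductions in (\ref{minkowski}) are precisely the averages $\mathcal{A}^{(D_m g)}_{N,N'}[b_0]$ of the complexity-$\le(d-1)$ systems, controlled uniformly in $m$ by the inductive hypothesis. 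The single place in \S\ref{main} where the \emph{lattice} structure of $L^p$ genuinely intervened is the truncation $u\mapsto u\mathbf{1}_S$ in the proof of Theorem \ref{principal}, whose only purpose was to upgrade a bound on $\left\|u\right\|_2$ to one on $\left\|u\right\|_\infty$ before invoking Lemma \ref{inverso}; with a single norm this is not merely unnecessary but automatic. Indeed, setting $\left\|\cdot\right\|_L:=\left\|\cdot\right\|_{\Sigma_L^+}$ with $\Sigma_L^+:=\Sigma_L\cup\{\left\|\cdot\right\|\le\delta/C^{\ast}\}$ as before, the decomposition $f=\sum_t\lambda_t\sigma_t+u+v$ of (\ref{partes}) produced by Proposition \ref{decomposition} satisfies, after the usual step of absorbing the small-ball terms into $v$ (so that $\left\|v\right\|<2\delta$), the bound $\left\|\sum_t\lambda_t\sigma_t\right\|\le\sum_t|\lambda_t|\le C_i$ --- each surviving $\sigma_t$ being reducible and hence of norm $\le1$ --- whence $\left\|u\right\|=\left\|f-\sum_t\lambda_t\sigma_t-v\right\|\le1+C_i+2\delta<3C_i$ (using $C_i\ge1>2\delta$), precisely the hypothesis of Lemma \ref{inverso} with $C=C_i$. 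The rest of the proof of Theorem \ref{principal} is then unchanged: $\sum_t\lambda_t\sigma_t$ is handled by Proposition \ref{low dim} through the choice of $\psi$, the term $u$ by Lemma \ref{inverso} together with Lemma \ref{normas} (an $A$-reducible $\sigma$ with $\langle u,\sigma\rangle>2\eta(C_i)$ would contradict $\left\|u\right\|_A^{\ast}<\eta(C_i)$), and $v$ by Minkowski's inequality.

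With both pillars in place, $(g)$ has finite complexity, Theorem \ref{principal} yields the uniform oscillation estimates, and --- as in the remarks following that theorem --- $\tfrac1N\sum_{n=1}^N g(n)w$ is Cauchy in $\mathcal{H}$ and hence converges by completeness. I do not expect any single hard step: the work is the bookkeeping of confirming that each result of \S\ref{lemmata}--\S\ref{main} retains its proof in the one-norm, no-product setting, and the only genuinely new (though short) point --- the one that needs care to state cleanly --- is the elimination of the truncation via the automatic bound $\left\|u\right\|<3C_i$ above.
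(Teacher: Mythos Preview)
Your proposal is correct and follows exactly the approach the paper sketches: for size-$1$ systems the empty product removes any need for the algebra property of $L^\infty$, and since the $m$-reduction of $(g)$ is $(D_m g)$, again of size $1$, nilpotency (used only to keep products of polynomial sequences polynomial) is never invoked. Your explicit handling of the truncation step---replacing $u\mapsto u\mathbf{1}_S$ by the direct bound $\|u\|\le\|f\|+\sum_t|\lambda_t|+\|v\|<3C_i$---is the one point the paper leaves entirely implicit, and you have it right.
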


If the group $G$ is assumed to be nilpotent this provides an alternative proof of a result already established by Bergelson and Leibman in \cite{BL}. Also, it is important to notice that it is not presently known whether if by dropping the nilpotency assumption on the group one is in fact obtaining a more general result.

\appendix
\section{Some examples of reductions}

We now provide some concrete examples of how the process studied in Section \ref{nilpotent} returns the trivial system for some polynomial systems. Given systems ${\bf g}$ and ${\bf h}$ we write ${\bf g} \sim {\bf h}$ to mean that both systems are equivalent and we write ${\bf g} \stackrel{m}{\rightarrow} {\bf h}$ to mean that ${\bf h}$ is the $m$-reduction of ${\bf g}$. Given measure preserving transformations $T,S,C$ of a probability space, we will use the convention of writing $T^{n^2}S^nC$ for the $G$-sequence $g$ given by $g(n)=T^{n^2}S^n C$.

\medskip
Before giving the examples, we note that an unpleasant feature of the process described in the body of the paper is that in the simplest cases it is unnecessarily complicated (mainly, this happens when the group is abelian). A way to amend this is the trivial observation that the averages (\ref{system averages}) for sequences $g_i(n)=h_i(n)C_i$, with $C_i$ some set of transformations, equal the averages associated to the system ${\bf h}=(h_1,\ldots,h_j)$ evaluated at the functions $C_i f_i$. Thus, we may extend the previous equivalence relation to include those pairs of systems which may be obtained from each other by adding or removing a constant in the above manner. It is easy to check that the arguments of \S \ref{main} work equally well with this alternative notion of equivalence. We write ${\bf g} \sim^{\ast} {\bf h}$ if ${\bf g}$ and ${\bf h}$ are equivalent in this way and call this modification of the process 'cheating'. While there is no substantial gain by using this slight modification in the general case, many of the examples discussed below become much cleaner in this way. Nevertheless, we will also show in every case how the process is performed without cheating.

\begin{ej}
\label{constant}
As a trivial example, suppose our system is constant i.e. of the form $(C_1,\ldots,C_j)$ for some constant $G$-sequences $C_1, \ldots, C_j$. Then its $m$-reduction is equivalent to $(1_G,C_1,\ldots,C_{j-1})$ for every $m$, so in particular we get the trivial system after at most $j$ steps. Of course, if one is allowed to cheat, one has $(C_1,\ldots,C_j)\sim^{\ast} (1_G)$ to begin with.
\end{ej}

\begin{ej} 
\label{lineal}
Suppose we are given a linear system $(L_1^n C_1,\ldots,L_j^n C_j)$ consisting of commuting transformations $L_1,C_1, \ldots,L_j,C_j$. The $m$-reduction of this system is given by
\begin{align}
\label{1}
(L_1^{n}C_1, \ldots,L_{j-1}^nC_{j-1},L_j^{-m},& L_j^{-m} L_1^{n+m} C_1, \ldots, L_j^{-m} L_{j-1}^{n+m}C_{j-1}) \\
\nonumber & \sim^{ \ast} (L_1^n, \ldots, L_{j-1}^n).
\end{align}
This highlights the advantage of cheating. Indeed, this would allow us to go to the trivial system in $j$ steps, while without cheating we would require more than $2 \uparrow \uparrow j$ steps. We now see how the latter is accomplished. Our objective is to eliminate $L_{j-1}^n$ from the reduction (\ref{1}) (cf. the general strategy given in the proof of Theorem \ref{final}). In general, if we are given a system of the form
$$ (L_1^n C_{1,1}, \ldots, L_1^n C_{1,i_1}, \ldots, L_k^n C_{k,1}, \ldots, L_k^n C_{k,i_k} ),$$
with the transformations generating an abelian group, its $m$-reduction will be equivalent to
\begin{align*}
&(L_k^{-m}, L_1^n C_{1,1}, \ldots, L_1^n C_{1,i_1}, L_1^n C_{1,1}L_k^{-m}L_1^m, \ldots,L_1^n C_{1,i_1}L_k^{-m}L_1^m,\ldots,\\
&L_{k-1}^n C_{k-1,1},\ldots, L_{k-1}^n C_{k-1,i_{k-1}}, L_{k-1}^n C_{k-1,1}L_k^{-m}L_{k-1}^m, \ldots, L_{k-1}^n C_{k-1,i_{k-1}}L_k^{-m}L_{k-1}^m,\\
&L_k^n C_{k,1}, \ldots, L_k^n C_{k,i_k-1} ).
 \end{align*}
In particular, $L_i^n$ appears twice as many times as before for every $1 \le i < k$, while $L_k^n$ appears one time less, therefore disappearing after $i_k$ steps. Notice also that at each step we get twice plus one as many constant sequences as before. Applying these observations we see that the system $(L_1^n C_1,\ldots,L_j^n C_j)$ reduces to one consisting only of constant sequences after at most $a(j)$ steps, with $a:\mathbb{N} \rightarrow \mathbb{N}$ the function recursively defined by $a(1)=1$ and $a(n+1)=a(n)+2^{a(n)}$. We may then proceed as in Example \ref{constant}.
\end{ej}

\begin{ej}
\label{1 cuadrado}
Consider now a system of the form $(S_1^n C_1,\ldots,S_{j-1}^n C_{j-1}, T^{n^2} S_j^n C_j)$ for commuting $T,S_i,C_i$. The $m$-reduction is given by
\begin{align*}
 (S_1^n C_1,& \ldots, S_{j-1}^n C_{j-1}, T^{-2mn-m^2}S_j^{-m}, T^{-2mn-m^2} S_j^{-m} S_1^{n+m}C_1, \ldots,\\
 &T^{-2mn-m^2} S_j^{-m} S_{j-1}^{n+m}C_{j-1}),
\end{align*}
which is a linear system and therefore reduces to the trivial system by the procedure discussed in Example \ref{lineal}.
\end{ej}

\begin{ej}
If we are given a system of size $1$ consisting of a polynomial $G$-sequence $g$ then it is obvious that the number of steps required to reach the trivial system is the total degree of $g$. Notice that this is true even when the group $G$ is not assumed to be nilpotent.
\end{ej}

\begin{ej}
Consider $(T^{n^2}, T^{n^2}S^n)$ for commuting $T$ and $S$. The $m$-reduction is given by
\begin{align*}
(T^{n^2}, T^{-2mn-m^2}&S^{-m} , T^{n^2}S^{-m})\\
&\stackrel{l}{\rightarrow} (T^{n^2}, T^{-2mn-m^2}S^{-m} , T^{-2ln-l^2}, T^{n^2}, T^{-2ln-l^2-2m(n+l)-m^2}S^{-m}),
\end{align*}
and this is equivalent to a system of the form studied in Example \ref{1 cuadrado}.
\end{ej}

\begin{ej}
Consider $(T^n,S^n)$ with $T$ and $S$ generating a nilpotent group. The $m_1$-reduction is given by
$$ (T^n, S^{-m_1},S^{-m_1}T^{m_1}T^n).$$
Write $C := S^{-m_1}T^{m_1}$, $C^{(1)} := S^{-m_1}$. Then the $m_2$-reduction of this is given by
\begin{align*}
(T^n, C^{(1)}, C T^{-m_2} C^{-1},&C T^{-m_2} C^{-1}T^{m_2} T^n, C T^{-m_2} C^{-1} C^{(1)} ) \\
& \sim (C^{(1)},C^{(2)},C^{(3)},T^n, [C^{-1},T^{m_2}]T^n).
\end{align*}
for some constant $G$-sequences $C^{(2)}, C^{(3)}$ which depend on $m_1,m_2$. By the same reasoning we see that after reducing at $m_3, \ldots, m_l$ (and passing to equivalent systems) we get the system
$$ (C^{(1)}, C^{(2)}, \ldots, C^{(c(l))}, T^n, [[[[C^{-1},T^{m_2}]^{-1},T^{m_3}]^{-1},\ldots]^{-1},T^{m_l}]T^n),$$
for some constant $G$-sequences $C^{(i)}$, $1 \le i \le c(l)$, with $c:\mathbb{N} \rightarrow \mathbb{N}$ the increasing function defined recursively by $c(1)=1$ and $c(n+1)=2c(n)+1$. Clearly, this is equivalent to $(C^{(0)}, C^{(1)}, \ldots, C^{(c(l))}, T^n)$ for some $l$ of size at most $s+1$, with $s$ the nilpotency class of the group. Since any reduction of this last system will be a constant system of size $c(l+1)$ it follows that our original system $(S^n,T^n)$ reduces to the trivial one in at most $s+2+c(s+1)$ steps. Of course, $s+2$ steps would have sufficed if we were allowed to cheat.
\end{ej}

\begin{ej}
Our last example is the system $(T^{n^2}, S^{n^2})$ for commuting $T$ and $S$. We have
\begin{align*}
(T^{n^2}, S^{n^2}) &\stackrel{m_1}{\rightarrow} (T^{n^2}, S^{-2nm_1-m_1^2}, S^{-2nm_1-m_1^2} T^{n^2} T^{2nm_1+m_1^2}) \\
&\sim (S^{-2nm_1-m_1^2}, T^{n^2}, S^{-2nm_1-m_1^2} T^{n^2} T^{2nm_1+m_1^2}) \\
 &\stackrel{m_2}{\rightarrow} (S^{-2nm_1-m_1^2},T^{n^2}, T^{-2nm_2-m_2^2-2m_1m_2}S^{2m_1m_2},\\ 
  &T^{-2nm_2-m_2^2-2m_1m_2}S^{-2nm_1-m_1^2},T^{n^2}S^{2m_1m_2}T^{-2m_1m_2}) \displaybreak[0] \\
&\stackrel{m_3}{\rightarrow} (S^{-2nm_1-m_1^2},T^{n^2}, T^{-2nm_2-m_2^2-2m_1m_2}S^{2m_1m_2},\\ 
  &T^{-2nm_2-m_2^2-2m_1m_2}S^{-2nm_1-m_1^2}, T^{-2nm_3-m_3^2},\\
  &T^{-2nm_3-m_3^2}S^{-2nm_1-2m_1m_3-m_1^2}, T^{n^2},\\
  &T^{-2nm_3-m_3^2-2nm_2-2m_2m_3-m_2^2-2m_1m_2}S^{2m_1m_2},\\
  & T^{-2nm_3-m_3^2-2nm_2-2m_2m_3-m_2^2-2m_1m_2}S^{-2nm_1-2m_1m_3-m_1^2})
 \end{align*} 
and this last system is equivalent to one of the form studied in Example \ref{1 cuadrado}.
\end{ej}

\end{document}